\DeclareSymbolFont{cyrletters}{OT2}{wncyr}{m}{n}
\DeclareMathSymbol{\Sha}{\mathalpha}{cyrletters}{"58}
\theoremstyle{plain} 
\newtheorem{thm}{Theorem}[section]
\newtheorem*{thm*}{Theorem}
\newtheorem{cor}[thm]{Corollary}
\newtheorem{lem}[thm]{Lemma}
\newtheorem{prop}[thm]{Proposition}
\numberwithin{equation}{section}
\theoremstyle{definition}
\newtheorem*{defn*}{Definition}
\newtheorem*{rem*}{Remark}
\newtheorem*{exa*}{Examples}
\newcommand{\Z}{\mathbb{Z}}
\newcommand{\Q}{\mathbb{Q}}
\newcommand{\Tr}{\mathrm{Tr}}
\newcommand{\Qtr}{\mathrm{Qtr}}
\newcommand{\Dec}{\mathrm{Dec}}
\newcommand{\Gal}{\mathrm{Gal}}
\title{The 3-rd unramified cohomology for norm one torus}
\author{Hanqing Long}
\address{Academy of Mathematics and System Science, CAS, Beijing 100190,
	P.\ R.\ China}
\email{hanqinglong@amss.ac.cn}
\author{Dasheng Wei}
\address{Hua Loo-Keng Key Laboratory of Mathematics,
	Academy of Mathematics and System Science, CAS, Beijing 100190,
	P.\ R.\ China \& School of mathematical Sciences, University of  CAS, Beijing
	100049, P.\ R.\ China}
\email{dshwei@amss.ac.cn}
\date{September 30, 2022}
\begin{document}
	
\maketitle

\bibliographystyle{alpha}

\begin{abstract}
For an algebraic torus $S$, Blinstein and Merkurjev  have given an estimate of $3$-rd unramified cohomology
$\bar{H}^3_{nr}(F(S),\mathbb Q/\mathbb Z(2))$ obtained from a flasque resolution of $S$. 
Based on their work, for the norm one torus $W=R_{K/F}^{(1)}\mathbb{G}_m$ with $K/F$ abelian, we compute the $3$-rd unramified cohomology $\bar{H}^3_{nr}(F(W),\mathbb Q/\mathbb Z(2))$. 
\end{abstract}

\section{Introduction}

Let $X$ be an integral algebraic variety over a field $F$, i.e. a separated scheme and of finite type over $F$.
The $i$-th  unramified cohomology group of $X$ denoted by $H_{nr}^{i}(F(X)/F,\mathbb{Q/Z}(j))$ (see section \S \ref{sec:pre} ) is defined by a subgroup of $H^{i}(F(X),\mathbb{Q/Z}(j))$,
which were first introduced by Colliot-Thélène and Ojanguren \cite{colliot1989varietes}.
We will write $\bar{H}_{nr}^{i}(F(X)/F,\mathbb{Q/Z}(j))$ as  $H_{nr}^{i}(F(X)/F,\mathbb{Q/Z}(j))/H^{i}(F,\mathbb{Q/Z}(j))$.

The unramified cohomology groups are homotopy invariants and can be used as an invariant to disprove the rationality of an algebraic variety. 
The non-triviality of the group $\bar{H}_{nr}^{i}(F(X)/F,\mathbb{Q/Z}(j))$ implies the stably irrational property of $X$. 
Peyre \cite{peyre1993unramified}, Colliot-Thélène and Pirutka \cite{colliot2016hypersurfaces}, Schreieder \cite{schreieder2019stably} have given many examples of varieties which are not stably rational but unirational.
The  unramified cohomology groups also play a role on rational points. In \cite{harari2015weak,harari2013local}, a certain
subgroup of the third unramified cohomology group can control the defect of weak approximation for tori over $p$-adic function fields.

There are no general methods to estimate the  unramified cohomology groups for general variety $X$. Let $W$ be a torus over $F$.
In 1995, Colliot-Th\'el\`ene \cite[p. 39]{ct1995} raised the problem: for $n$ prime to $\mathrm{char}(F)$ and $i>0$ , determine the group $\bar{H}_{nr}^{i}(F(W)/F,\mu_n^{\otimes(i-1)})$.
The first unramified cohomology group $\bar{H}_{nr}^{1}(F(W)/F,\Z/n\Z)$ is trivial.
The second unramified cohomology group $\bar{H}_{nr}^{2}(F(W)/F,\mathbb{Q/Z}(1))$ coincides with the unramified Brauer group $\mathrm{Br}_{nr}(W)/\mathrm{Br}(F)$ which is isomorphic to  the group $\Sha_\omega^2(\hat{W})$ and has been extensively studied (see \cite{colliot1987principal} and \cite{karpilovsky1987schur}).
In 2013, Blinstein and Merkurjev investigated  Colliot-Th\'el\`ene's problem when $i=3$ and obtained an exact sequence for the group $\bar{H}_{nr}^{3}(F(W)/F,\mathbb{Q/Z}(2))$, see \cite[Proposition 5.9]{m2013}.

Let $K/F$ be a finite abelian extension and $W$ the norm one torus $R_{K/F}^{(1)}\mathbb{G}_{m,K}$. Based on Blinstein and Merkurjev's work, we explicitly determine  the $p$-primary component of $\bar{H}_{nr}^{3}(F(W)/F,\mathbb Q/\mathbb Z(2))$  which we denote by $\bar{H}_{nr}^{3}(F(W)/F,\mathbb Q/\mathbb Z(2))\{p\}$ for any odd prime~$p$; furthermore, if $K/F$ has odd degree, we completely determine the group $\bar{H}_{nr}^{3}(F(W)/F,\mathbb Q/\mathbb Z(2))$. To our knowledge, our result is the first one of tori whose third unramified cohomology groups are completely determined and nonzero.

\begin{thm*}
	 Let $W=R_{K/F}^{(1)}(\mathbb G_{m,K})$ and $K/F$  an abelian extension  with $\mathrm{Gal}(K/F)=G\simeq\oplus_{i=1}^mC_i$ such that each $C_i$ is cyclic and $\#C_{i-1}|\#C_i$ $(1<i\leq m)$. Denote $d_i=(m-i)(m-i-1)/2$.  
	Then for any odd prime $p$, 
	$$\bar{H}^3_{nr}(F(W),\mathbb Q/\mathbb Z(2))\{p\}\simeq H^3(G,K^*)\{p\}  \oplus\oplus_{i=1}^{m-2}C_i\{p\}^{d_{i}}.
	$$
	Moreover, if $K/F$ has odd degree, then
	$$
	\bar{H}^3_{nr}(F(W),\mathbb Q/\mathbb Z(2))\simeq H^3(G,K^*) \oplus \oplus_{i=1}^{m-2}C_i^{d_{i}}.
	$$
\end{thm*}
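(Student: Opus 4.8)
The plan is to start from Blinstein and Merkurjev's exact sequence (\cite[Proposition 5.9]{m2013}), which expresses $\bar{H}^3_{nr}(F(W),\mathbb Q/\mathbb Z(2))$ in terms of data extracted from a flasque resolution $1\to S\to Q\to W\to 1$ of the norm one torus $W$. The first concrete task is to make this input explicit for $W=R^{(1)}_{K/F}\mathbb G_{m,K}$: here the character lattice $\hat W$ sits in the standard exact sequence $0\to\Z\to\Z[G]\to\hat W\to 0$ (the augmentation sequence), so its dual lattice and a flasque (or coflasque) resolution can be written down purely in terms of the $G$-module structure of $\Z[G]$. I would identify each term appearing in Merkurjev's sequence — the group $H^1(F,\hat{\mathrm{Pic}})$-type contribution, the relevant $\Sha$-groups, and the ``decomposable/indecomposable'' cokernel controlling the unramified classes — as a cohomology group of $G$ with coefficients in $K^*$ or in the lattices built from $\Z[G]$.

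The heart of the computation is then a piece of group cohomology. The summand $H^3(G,K^*)$ should emerge directly as the ``arithmetic'' part of Merkurjev's sequence (the part that survives over an arbitrary base and records the Galois cohomology of the splitting field), while the combinatorial summand $\oplus_{i=1}^{m-2}C_i\{p\}^{d_i}$ must come from a purely lattice-theoretic term: a cohomology group of $G$ with coefficients in $\hat W$ or its flasque complement, or equivalently a relative $K$-theory / $\Sha^2$ type group attached to the exact sequence $0\to\Z\to\Z[G]\to\hat W\to 0$. Writing $G\simeq\oplus_{i=1}^m C_i$ with the divisibility $\#C_{i-1}\mid\#C_i$, I expect the exponents $d_i=(m-i)(m-i-1)/2$ to appear as the number of independent ``$2$-subset'' contributions from the cyclic factors $C_{i+1},\dots,C_m$ of order divisible by $\#C_i$ — that is, a counting of rank-two direct summands of $G$ whose associated cup products or transgressions produce a copy of $C_i$. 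The natural tool is the cohomology of a product of cyclic groups via the Künneth formula together with the periodicity of cyclic-group cohomology, which turns $H^*(G,-)$ into an explicit sum indexed by subsets of $\{1,\dots,m\}$; tracking which of these subsets land in the unramified subgroup gives the binomial count $\binom{m-i}{2}=d_i$.

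**The hard part will be** twofold. First, at the odd prime $p$ the Künneth and periodicity arguments are clean, but one must verify that every term in Merkurjev's sequence that could obstruct the stated isomorphism actually vanishes or splits off as the claimed direct sum — in particular that the maps in the exact sequence are either zero or surjective on the $p$-primary part, so that the sequence degenerates into a direct-sum decomposition rather than leaving a genuine extension. This requires a careful analysis of the connecting maps (likely cup products with the fundamental class of $G$) and an argument that for odd $p$ no cancellation occurs between the $H^3(G,K^*)$ part and the lattice part. Second, the passage from the $p$-primary statement (odd $p$) to the full statement under the hypothesis that $K/F$ has odd degree is exactly the step where the prime $2$ is eliminated: when $\#G$ is odd there is no $2$-primary component to worry about, all the relevant $2$-torsion obstructions are automatically trivial, and the $p$-primary isomorphisms for all $p\mid\#G$ assemble into the integral isomorphism. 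I would prove the odd-degree case by observing that the whole computation is supported on the primes dividing $\#G$, applying the first part prime by prime, and invoking the vanishing of the $2$-part to upgrade the direct sum of $p$-primary pieces to the stated integral decomposition $H^3(G,K^*)\oplus\bigoplus_{i=1}^{m-2}C_i^{d_i}$.
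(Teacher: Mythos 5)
Your overall skeleton matches the paper's: both start from Blinstein--Merkurjev's exact sequence (Theorem \ref{mainlem}), identify the ``arithmetic'' summand with $H^3(G,K^*)$ (the paper does this in Lemma \ref{firstterm} by two dimension shifts, $H^1(F,T^\circ)\simeq H^2(G,W^\circ(K))\simeq H^3(G,K^*)$), and compute the remaining lattice-theoretic term via Hochschild--Serre/K\"unneth for a product of cyclic groups, with the odd-degree case following because the whole computation is supported on primes dividing $\#G$. One of the points you flag as hard is not: the direct-sum splitting of the sequence at odd $p$ is already part of the cited Proposition 5.9 of \cite{m2013}, so nothing needs to be proved there.

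The genuine gap is at the centre of your plan: you never say how to compute the term $H^0(F,S^2(\hat{T}_{sep}))/\Dec$, and in particular how to handle the subgroup $\Dec$. This is where essentially all of the paper's work lies. The paper proves that $S^2(\hat{P}_{sep})^G/\Dec=0$ for the quasisplit torus $P$ in the flasque resolution (Lemma \ref{invertibletorus}), that $H^i(G,S^2(\Z[G]))\{p\}=H^i(G,\wedge^2\Z[G])\{p\}=0$ for odd $p$ (Lemma \ref{duichen}), and then uses the kernel $N$ of $S^2(\hat{P}_{sep})\to S^2(\hat{T}_{sep})$ together with a ladder of cup-product compatibilities to identify $(S^2(\hat{T}_{sep})^G/\Dec)\{p\}$ with $\mathrm{Coker}(H^2(G,\Z)\times H^2(G,\Z)\stackrel{\cup}{\longrightarrow}H^4(G,\Z))\{p\}$ (Lemma \ref{commutator} and Proposition \ref{prop:cup}). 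Your proposal gestures at ``cup products or transgressions'' and at a $\Sha^2$-type group, but it neither identifies this cokernel as the object to be computed nor explains why the decomposable classes $\Qtr_{\Gamma'}(a)$ and $(\hat{T}_{sep}^G)^2$ can be absorbed into the image of the cup product. Without that reduction, the K\"unneth count yielding $\binom{m-i}{2}=d_i$ copies of $C_i$ has nothing to attach to, so the combinatorial summand remains unproven; you would need to supply the analogues of Lemmas \ref{invertibletorus}--\ref{commutator} before the final spectral-sequence computation can be carried out.
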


If $G=\Z/n\oplus \Z/n$ and $n$ is odd, then $\bar{H}^3_{nr}(F(W),\mathbb Q/\mathbb Z(2)) \simeq H^3(G,K^*)$. 
In particular, if $F$ is a local field or some special number field (see Examples \ref{exa:un}), then $\bar{H}^3_{nr}(F(W),\mathbb Q/\mathbb Z(2))=0$. However, $\bar{H}^2_{nr}(F(W),\mathbb Q/\mathbb Z(1))=\Z/n$ is always  nontrivial.

\bigskip
{\bf Acknowledgements. }
The second author is supported by National Key R$\&$D Program of China.  The authors thank anonymous reviewers for their suggestions on Proposition \ref{prop1}.

\section{Preliminaries}\label{sec:pre}

In this section, we will introduce the unramified cohomology groups. 
The notation and definition coincide with \cite{m2013,ct1995,ct1977}.

Let $F$ be the base field (of arbitrary characteristic) and $\Gamma=\Gal(F^{sep}/F)$ the absolute Galois
group of $F$.
Given an integer $j>0$, 
we denote $\Q/\Z(j)$ as the direct sum of  
$$
\Q_p/\Z_p(j)=
\begin{cases}
	\underset{n}{\mathrm{colim}}\ \mu_{p^n}^{\otimes j}, \quad p\neq \mathrm{char}(F)\\
	\underset{n}{\mathrm{colim}}\ W_n\Omega_{\log}^j[-j], \quad p= \mathrm{char}(F)
\end{cases}
$$
in the derived category of sheaves of abelian groups on the big \'etale site of $\mathrm{Spec}\ F$ over all primes $p$, where $ W_n\Omega_{\log}^j[-j]$ is the sheaf of logarithmic de Rham–Witt differentials (see \cite{ASENS_1979_4_12_4_501_0} and \cite{m2013}).

Given a field extension $L/F$, let $A$ be a rank one discrete valuation ring with fraction field $L$
which contains $F$. 
The $i$-th unramified cohomology group of $L/F$ is defined by the group
$$
H_{nr}^i(L/F,\Q/\Z(j))):=\bigcap_{A\in P(L)}\mathrm{im}\ (H^i(A,\Q/\Z(j))\longrightarrow H^{i}(L,\Q/\Z(j)))
$$
where $P(L)$ is the set of all rank one discrete valuation rings which contains $F$ and has quotient field $L$.
They can also be defined by the intersection of the kernel of residue map $\partial_A$ for $A\in P(L)$ when $\mathrm{char}(F)=0$ \cite{ct1995}.

If $X$ is a smooth integral variety over $F$, the $i$-th unramifiel cohomology group of $X$ is defined by $H_{nr}^i(F(X)/F,\Q/\Z(j))$.
These groups are only dependent on the function field of $X$, and when $X$ is proper, we can replace $P(L)$ by the set of local rings of codimension one points in $X$ \cite{ct1995}. 
If $i=2$, the unramified chomology group coincides with the unramified Brauer group of $X$.
We will denote $\bar{H}_{nr}^{i}(F(X)/F,\mathbb{Q/Z}(j))$ by $H_{nr}^{i}(F(X)/F,\mathbb{Q/Z}(j))/H^{i}(F,\mathbb{Q/Z}(j))$.

Recall that an algebraic torus of dimension $n$ over $F$ is an algebraic group $T$ such that $T_{sep}$ is isomorphic to $\mathbb{G}_m^{n}$. 
For an algebraic torus $T$, we write $\hat{T}_{sep}$ as the $\Gamma$-module $\mathrm{Hom}(T_{sep},\mathbb{G}_m)$ and call it character lattice of $T$. 
The contravariant functor $T \mapsto \hat{T}_{sep}$ is an anti-equivalence between the
category of algebraic torus over $F$ and the category of $\Gamma$-lattices. 
We write $\hat{T}$ as the character group $\mathrm{Hom}_F(T,\mathbb G_m)$ and $T^\circ$ as the dual torus of $T$ such that $(\hat{T}^\circ)_{sep}=(\hat{T}_{sep})^\circ$.

Let $K/F$ be a finite Galois extension with $G=\mathrm{Gal}(K/F)$. 
We embed $K$ to $F_{sep}$, the separable closure of $F$, such that the absolute Galois group $\Gamma_K$ is a subgroup of $\Gamma$. We mainly consider the unramified cohomology of the norm one torus  $W=R_{K/F}^{(1)}(\mathbb G_{m,K})$. It is defined by the equation $N_{K/F}(\Xi)=1$ and can be constructed by the exact sequence
$$
1\longrightarrow W\longrightarrow R_{K/F}(\mathbb G_{m,K})\longrightarrow\mathbb G_{m}\longrightarrow 1
$$
and for its character lattice, we have
$$
0\longrightarrow\mathbb Z\stackrel{N_G}{\longrightarrow} \mathbb Z[G]\longrightarrow\hat{W}_{sep}\longrightarrow 0
$$
where $N_G$ maps  $1$ to $\sum_{g\in G}g$. 

A torus $T$ is called quasisplit if its character lattice $\hat{T}_{sep}$ is a permutation $\Gamma$-module, i.e. isomorphic to the finite direct-sum of $\mathbb Z[\Gamma/\Gamma_i]$ for open subgroups $\Gamma_i\leq \Gamma$.
And $T$ is called flasque (resp. coflasque) if $H^1(L,\hat{T}_{sep}^\circ)=0$ (resp. $H^1(L,\hat{T}_{sep})=0$) for all finite field extension $L/F$. 
A flasque resolution (resp. coflasque resolution) of a torus $T$ is an exact sequence of torus 
$$
1\longrightarrow S\longrightarrow P\longrightarrow T\longrightarrow 1
$$
(resp. $1\longrightarrow T\longrightarrow P\longrightarrow M\longrightarrow 1$ ) where $P$ is quasisplit and $S$ is flasque (resp. $M$ is coflasque).

For a torus $S$ over $F$, Blinstein and Merkurjev \cite{m2013} give an estimate of $\bar{H}^3_{nr}(F(S),\mathbb Q/\mathbb Z(2))$:

\begin{thm}(\cite{m2013}, Proposition 5.9)\label{mainlem}
Let $S$ be a torus over $F$ and let 
$$
1\longrightarrow T\longrightarrow P\longrightarrow S\longrightarrow 1
$$
be a flasque resolution of $S$. Then we have an exact sequence
\begin{align*}
0\longrightarrow CH^2(BT)_{tors}\longrightarrow H^1(F,T^\circ)\longrightarrow&\bar{H}^3_{nr}(F(S),\mathbb Q/\mathbb Z(2))\\
\longrightarrow& H^0(F,S^2(\hat{T}_{sep}))/\Dec\longrightarrow H^2(F,T^\circ).
\end{align*}
For an odd prime $p$, there is a canonical direct sum decomposition
$$
\bar{H}^3_{nr}(F(S),\mathbb Q/\mathbb Z(2))\{p\}\simeq H^1(F,T^\circ)\{p\}\oplus (H^0(F,S^2(\hat{T}_{sep}))/\Dec)\{p\}.
$$
\end{thm}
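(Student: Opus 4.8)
The plan is to deduce the exact sequence from the theory of cohomological invariants of the flasque torus $T$, which governs the unramified cohomology of $S$ through the flasque resolution. The first step is to reduce $\bar H^3_{nr}(F(S),\Q/\Z(2))$ to invariants of $T$. Since $P$ is quasisplit it is an open subvariety of affine space, hence $F$-rational, so its unramified cohomology over $F$ vanishes in the relevant degree, and the projection $P\to S$ exhibits $F(P)$ as the function field of the generic fibre, a $T$-torsor $E$ over $F(S)$. Evaluating a degree-$3$ normalized cohomological invariant of $T$ with values in $\Q/\Z(2)$ on this generic torsor produces a class in $H^3(F(S),\Q/\Z(2))$, and one checks through the residue/specialization description of invariants that its image lands in the unramified subgroup. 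The key structural input is that this evaluation map is an \emph{isomorphism} onto $\bar H^3_{nr}(F(S),\Q/\Z(2))$: every unramified class arises from a unique normalized invariant, which uses that $T$ is flasque so that $T$-torsors carry no residue obstruction and $E$ is versal in the appropriate sense.

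Next I would compute the group $\mathrm{Inv}^3_{norm}(T,\Q/\Z(2))$ of normalized invariants through the étale and Chow cohomology of the classifying space $BT$. Over a splitting field $\hat T_{sep}$ is a trivial lattice and $CH^\bullet(BT)$ is the symmetric algebra $S^\bullet(\hat T_{sep})$, so the degree-$2$ part contributes $S^2(\hat T_{sep})$; descending to $F$ one sees the Galois-fixed piece $H^0(F,S^2(\hat T_{sep}))$ appear, and quotienting by the decomposable classes $\Dec$ (products of degree-$1$ invariants, i.e. the image of $\hat T^\Gamma\cdot\hat T^\Gamma$ together with norm classes) isolates the genuinely degree-$3$ part. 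Running the Bloch--Ogus/coniveau spectral sequence for $BT$ and comparing the motivic filtration on $H^3(BT,\Q/\Z(2))$ with the cycle map into $CH^2(BT)$ then yields the four terms. The torsion $CH^2(BT)_{tors}$ injects into $H^1(F,T^\circ)$, via the identification of $H^3$ of $BT$ with Galois cohomology of the dual torus $T^\circ$, whose cocharacter lattice is $\hat T_{sep}$; the quotient maps to the invariants, which surject onto $H^0(F,S^2(\hat T_{sep}))/\Dec$, the obstruction to realizing a symmetric tensor as an invariant being measured in $H^2(F,T^\circ)$. Substituting the isomorphism of the first step gives the stated sequence.

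For the $p$-primary decomposition with $p$ odd I would show the relevant extension splits canonically. The only obstruction to splitting the resulting sequence relating $H^1(F,T^\circ)/CH^2(BT)_{tors}$, the unramified group, and $H^0(F,S^2(\hat T_{sep}))/\Dec$ is $2$-torsion: it originates from the factor $2$ distinguishing the symmetric square $S^2$ from its divided-power/exterior companions and from a corestriction--restriction computation that produces multiplication by $2$. After inverting $2$ this obstruction vanishes, a canonical splitting exists, and one obtains the direct sum on $\{p\}$-components for every odd $p$.

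The main obstacle will be the precise analysis of the classifying-space cohomology: computing $CH^2(BT)$ together with its torsion subgroup and matching the edge maps of the coniveau spectral sequence with Galois cohomology of $T^\circ$, so that exactness holds at each of the four spots. Equally delicate is verifying that evaluation on the generic torsor is an isomorphism onto the unramified subgroup rather than merely a map into it; this requires flasqueness of $T$ in an essential way and careful control of residues at all codimension-one points of a smooth compactification of $S$.
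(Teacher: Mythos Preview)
The paper does not prove this statement at all: it is quoted verbatim as Proposition~5.9 of Blinstein--Merkurjev \cite{m2013} and used as a black box throughout. There is therefore no ``paper's own proof'' to compare against; the only content surrounding the theorem is an explanation of the notation $CH^2(BT)_{tors}$ and $\Dec$.

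That said, your sketch is a faithful outline of the actual Blinstein--Merkurjev argument. The two structural steps you isolate---(i) identifying $\bar H^3_{nr}(F(S),\Q/\Z(2))$ with the group $\mathrm{Inv}^3_{norm}(T,\Q/\Z(2))$ of normalized degree-$3$ invariants via evaluation on the generic $T$-torsor coming from the flasque resolution, and (ii) computing these invariants through the motivic/\'etale cohomology of $BT$ and the cycle map to $CH^2(BT)$---are precisely how the exact sequence is produced in \cite{m2013}. Your explanation of the odd-$p$ splitting (the obstruction is $2$-torsion arising from the comparison between $S^2$ and $\Gamma^2$/$\wedge^2$) is also the correct mechanism. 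The points you flag as delicate---that evaluation on the generic torsor is an \emph{isomorphism}, not just a map, and that this genuinely uses flasqueness of $T$---are indeed where the work lies; in \cite{m2013} this is handled by their general machinery on semi-decomposable invariants and the comparison with unramified cohomology of torsors. If you intend to expand this into a full proof rather than a citation, those are the places requiring detailed argument.
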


The first term is the torsion subgroup of the second Chow group of the classify variety of $T$. 
The symbol $\Dec$ in the fourth term is the subgroup $\Dec(H^0(F,S^2(\hat{T}_{sep})))$ of $H^0(F,S^2(\hat{T}_{sep}))$, defined as follows. 
Given a $\Gamma$-module $A$, $\Dec(A)\subset S^2(A)$ is generated by $(A^{\Gamma})^2$ and $\Qtr_{\Gamma'}(A^{\Gamma'})$ for all open subgroups $\Gamma'\subset\Gamma$, with the following definition
\begin{align*}
\Qtr_{\Gamma'}:\ A^{\Gamma'}\to&\ S^2(A)^{\Gamma}\\
a\mapsto&\ \sum_{i<j}\sigma_i(a)\cdot \sigma_j(a)
\end{align*}
where $\{\sigma_i\}$ is a representative for the left cosets of $\Gamma'$ in $\Gamma$.

\begin{prop}\label{prop1}
	Let $F$ be a number field, then $\bar{H}^3_{nr}(F(S),\mathbb Q/\mathbb Z(2))$ is a finite group for any algebraic torus $S$ over $F$.
\end{prop}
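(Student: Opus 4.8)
The plan is to read the finiteness directly off the Blinstein--Merkurjev exact sequence of Theorem~\ref{mainlem}. Fix a flasque resolution $1\to T\to P\to S\to 1$ and name the three relevant maps $\alpha\colon H^1(F,T^\circ)\to \bar H^3_{nr}(F(S),\Q/\Z(2))$, $\beta\colon \bar H^3_{nr}(F(S),\Q/\Z(2))\to H^0(F,S^2(\hat{T}_{sep}))/\Dec$ and $\gamma\colon H^0(F,S^2(\hat{T}_{sep}))/\Dec\to H^2(F,T^\circ)$. Exactness at the two middle terms ($\operatorname{im}\alpha=\ker\beta$ and $\operatorname{im}\beta=\ker\gamma$) yields a short exact sequence
$$0\longrightarrow \operatorname{im}\alpha\longrightarrow \bar H^3_{nr}(F(S),\Q/\Z(2))\longrightarrow \ker\gamma\longrightarrow 0,$$
in which $\operatorname{im}\alpha$ is a quotient of $H^1(F,T^\circ)$ and $\ker\gamma$ is a subgroup of $H^0(F,S^2(\hat{T}_{sep}))/\Dec$. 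Hence it suffices to prove that each of $H^1(F,T^\circ)$ and $H^0(F,S^2(\hat{T}_{sep}))/\Dec$ is finite; the torsion Chow group $CH^2(BT)_{tors}$ plays no role.

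For the first group I would note that $T^\circ$ is again an algebraic torus over $F$, its character lattice being the dual lattice $(\hat{T}_{sep})^\circ$. It is then enough to invoke the well-known fact that $H^1(F,\cdot)$ of a torus over a number field is finite; this is the only place where the hypothesis that $F$ is a number field is used. Consequently $H^1(F,T^\circ)$, and a fortiori its quotient $\operatorname{im}\alpha$, is finite.

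For the second group I would argue that finiteness holds for a purely lattice-theoretic reason, namely that $\Dec$ has finite index in $H^0(F,S^2(\hat{T}_{sep}))$. Let $G$ denote the finite image of $\Gamma$ in $\operatorname{Aut}(\hat{T}_{sep})$ and write $M=\hat{T}_{sep}$, so that $H^0(F,S^2(\hat{T}_{sep}))=(S^2 M)^{G}$ is a lattice of finite rank containing $\Dec$. Finite index is equivalent to the equality $\Dec\otimes\Q=(S^2 M)^{G}\otimes\Q$, and here is the key computation. Taking $\Gamma'=\Gamma_{sep}$ (i.e.\ $H=\{1\}$ inside $G$), for any $v\in M$ the element $\sum_{g\in G}g(v)$ lies in $M^{G}$, and expanding $\bigl(\sum_{g\in G}g(v)\bigr)^2=\sum_{g\in G}g(v)^2+2\,\Qtr_{\{1\}}(v)$ shows that the transfer $\sum_{g\in G}g(v)^2$ equals a square of an invariant minus twice a quadratic transfer, hence lies in $\Dec$. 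Over $\Q$ the symmetric square $S^2(M\otimes\Q)$ is spanned by squares $v^2$, and the averaging projector $\tfrac1{|G|}\sum_{g\in G}g$ carries it onto its $G$-invariants; combining these facts, every element of $(S^2 M)^{G}\otimes\Q$ is a $\Q$-linear combination of the transfers $\sum_{g\in G}g(v)^2$ and so lies in $\Dec\otimes\Q$. This gives the desired equality and therefore finiteness of $H^0(F,S^2(\hat{T}_{sep}))/\Dec$.

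Feeding both finiteness statements into the short exact sequence above exhibits $\bar H^3_{nr}(F(S),\Q/\Z(2))$ as an extension of a finite group by a finite group, completing the proof. I expect the main (and essentially only nonroutine) obstacle to be the rational spanning argument of the previous paragraph: checking that the decomposable generators $(M^{G})^2$ together with the quadratic transfers already fill out the full invariant lattice $(S^2 M)^{G}$ after inverting $|G|$. The identity realizing $\sum_{g\in G}g(v)^2$ as a combination of a square of an invariant and a quadratic transfer is exactly what makes this go through; the finiteness of $H^1$ of a torus over a number field I would simply cite.
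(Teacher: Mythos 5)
Your reduction of the statement to the finiteness of the two flanking groups $H^1(F,T^\circ)$ and $H^0(F,S^2(\hat{T}_{sep}))/\Dec$ is sound, and the second half of your argument is correct and genuinely different from the paper's: the identity $\sum_{g\in G}g(v)^2=\bigl(\sum_{g\in G}g(v)\bigr)^2-2\,\Qtr_{\{1\}}(v)$ does place all norms of squares in $\Dec$, and together with polarization and the averaging projector it yields $\Dec\otimes\Q=(S^2\hat{T}_{sep})^{\Gamma}\otimes\Q$, hence finiteness of the lattice quotient. The paper reaches the same conclusion by a softer route (the whole group $\bar H^3_{nr}(F(S),\Q/\Z(2))$ is killed by the degree of a splitting field, so its image in the finitely generated group $(S^2\hat{T}_{sep})^{\Gamma}/\Dec$ is a finitely generated torsion group); your lattice-theoretic argument is more self-contained and does not need the torsion input.

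The genuine gap is in the first half. It is \emph{false} that $H^1(F,T)$ is finite for an arbitrary torus $T$ over a number field: for the norm one torus $T=R^{(1)}_{K/F}\mathbb G_m$ of a quadratic extension, the sequence $1\to T\to R_{K/F}\mathbb G_m\to\mathbb G_m\to 1$ gives $H^1(F,T)\simeq F^*/N_{K/F}K^*$, which is infinite (already for $F=\Q$, $K=\Q(i)$, the classes of the primes $p\equiv 3\pmod 4$ are independent in $\Q^*/N_{K/\Q}K^*$). What is true, and what the proposition actually needs, is finiteness for the \emph{coflasque} torus $T^\circ$ dual to the flasque torus of the resolution; this is the nontrivial content of the paper's proof and cannot be replaced by a citation to a general fact. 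The paper adapts the Colliot-Th\'el\`ene--Sansuc argument for flasque tori: enlarge a finite set $P$ of places so that $O_P$ has class number one, tensor $0\to O_P^*\to K^*\to\mathrm{Div}\,O_P\to 0$ with the relevant character lattice, note $\mathrm{Div}\,O_P\simeq\oplus_{v\notin P}\Z[G/G_v]$ with $G_v$ cyclic, and use that the lattice is flasque so that $H^1(G_v,\cdot)\simeq\hat H^{-1}(G_v,\cdot)=0$; this sandwiches $H^1(F,T^\circ)$ between a finite group and zero. Without some such argument exploiting the (co)flasque hypothesis, your proof does not close.
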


\begin{proof}
	 The group $\bar{H}^3_{nr}(F(S),\mathbb Q/\mathbb Z(2))$ is killed by the degree of the splitting field of $S$ over $F$, and $\bar{H}^3_{nr}(F(S),\mathbb Q/\mathbb Z(2))/H^1(F,T^\circ)$ is finitely generated by Theorem \ref{mainlem}.
	Therefore, it suffices to prove that $H^1(F,T)$ is finite for any coflasque torus $T$, here in fact $T$ is the dual of the flasque torus $T$ in Theorem \ref{mainlem}. 
	Indeed, its proof is a variant of the proof of   \cite[Theorem 1]{ct1977} which shows the finiteness of  $H^1(F,T)$  for any flasque torus. Let $K/F$ be a finite Galois extension which splits $T$, and $\Omega_{F}$ the set of places of $F$.
	Let $P\subset \Omega_{F}$ be a finite set which contains all infinite places and all ramified places in $K/F$ and $O_P=\{a\in K| \mathrm{ord}_w(a)\ge 0 \text{ for any }v\notin P \text{ and } w|v\}$ be the ring of $P$-integers. 
	We can enlarge $P$ such that the class number of $O_P$ is $1$.
	This induces an exact sequence of $G$-modules $0\to O_P^*\to K^*\to \mathrm{Div}\ O_P\to 0$.
	By construction, $$\mathrm{Div}\ O_P\simeq \oplus_{v\notin P}\oplus_{w|v}\mathbb{ Z}w\simeq\oplus_{v\notin P}\mathbb{ Z}[G/G_v]$$
	where $G_v=\Gal(K_w/F_v)$ is cyclic since $v$ is unramified. 
	Tensoring by $\hat{T}^\circ$ over $\mathbb{ Z}$, it is still exact and hence the following is exact:
	$$
	H^1(G,\hat{T}^\circ\otimes_{\mathbb Z}O_P^*)\to H^1(G,\hat{T}^\circ\otimes_{\mathbb Z}K^*)\to H^1(G,\hat{T}^\circ\otimes_{\mathbb Z}\oplus_{v\notin P}\mathbb{ Z}[G/G_v]).
	$$
	The first term is finite since $\hat{T}^\circ\otimes_{\mathbb Z}O_P^*$ is finitely generated. The second is no other than $H^1(G,T)\simeq H^1(F,T)$. The third is $0$ because $G_v$ is cyclic and $\hat{T}^\circ$ is flasque: $H^1(G,\hat{T}^\circ\otimes_{\mathbb Z}\mathbb{ Z}[G/G_v])\simeq H^1(G_v,\hat{T}^\circ)\simeq H^{-1}(G_v,\hat{T}^\circ)=0$.
\end{proof}

\section{Main results}

We continue to use the notations in \S \ref{sec:pre}. 
For the norm one torus $W=R^{(1)}_{K/F}(\mathbb{G}_{m,K})$ of Galois extension $K/F$,
we fix a flasque resolution of $W$ constructed from \cite[Proposition 15]{ct1977}:
\begin{equation}\label{seq:flasque}
1\longrightarrow T\longrightarrow P\longrightarrow W\longrightarrow 1 
\end{equation}
where $P=R_{K/k}(\mathbb{G}_{m,K})^r$ and $r$ is the number of generators of $G=\Gal(K/F)$.
We mainly consider the $3$-th unramified cohomology group
$\bar{H}^3_{nr}(F(W),\mathbb Q/\mathbb Z(2))$ and it can be expressed by $T$ from Theorem \ref{mainlem}. 
Building on this work, we will reduce $\bar{H}^3_{nr}(F(W),\mathbb Q/\mathbb Z(2))$ to a form determined only by the field extension $K/F$ and the Galois group $\mathrm{Gal}(K/F)$.

\begin{lem}\label{firstterm}
Let $W=R_{K/F}^{(1)}(\mathbb G_{m,K})$ be a norm one torus of  the finite Galois extension $K/F$ with $G=\mathrm{Gal}(K/k)$ and let 
$T$ be the torus in the flasque resolution (\ref{seq:flasque}) of $W$. Then we have $H^1(F,T^\circ)\simeq  H^3(G,K^*)$.
\end{lem}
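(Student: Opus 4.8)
The plan is to translate the whole statement into cohomology of $G=\Gal(K/F)$-modules and then to run two successive dimension shifts. First I would record that every torus in sight is split by $K$: applying the contravariant character functor to the flasque resolution (\ref{seq:flasque}) gives the exact sequence of $G$-lattices
$$0\longrightarrow \hat{W}_{sep}\longrightarrow \hat{P}_{sep}\longrightarrow \hat{T}_{sep}\longrightarrow 0,$$
and since $P=R_{K/F}(\mathbb G_{m,K})^r$ and $W$ are split by $K$, the module $\hat{T}_{sep}$ is a trivial $\Gamma_K$-module, hence so is its dual and therefore $T^\circ$ is split by $K$ as well. Because $T^\circ_K$ is then a split torus, Hilbert 90 gives $H^1(K,T^\circ_K)=0$, so inflation--restriction yields $H^1(F,T^\circ)\simeq H^1(G,T^\circ(K))$. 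Unwinding the dual-torus construction together with the character/cocharacter duality, the $G$-module of $K$-points of $T^\circ$ is $T^\circ(K)\simeq \hat{T}_{sep}\otimes_{\Z}K^*$. Thus it suffices to compute $H^1(G,\hat{T}_{sep}\otimes_{\Z}K^*)$.

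Next I would perform the two shifts using the sequence displayed above and the defining sequence $0\to\Z\xrightarrow{N_G}\Z[G]\to\hat{W}_{sep}\to 0$. Every term of both is $\Z$-free, so tensoring with $K^*$ over $\Z$ preserves exactness. The key computational input is that $\Z[G]\otimes_{\Z}K^*$ is an induced module: via the untwisting $g\otimes a\mapsto g\otimes g^{-1}a$ it is isomorphic to $\mathrm{Ind}_{1}^{G}(K^*)$ with $K^*$ viewed as a bare abelian group, so Shapiro's lemma gives $H^i(G,\Z[G]\otimes_{\Z}K^*)=0$ for all $i\geq 1$, and the same for $\hat{P}_{sep}\otimes K^*=(\Z[G]\otimes K^*)^r$. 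Feeding $\hat{P}_{sep}\otimes K^*$ into the long exact sequence attached to the first tensored sequence collapses it to $H^1(G,\hat{T}_{sep}\otimes K^*)\simeq H^2(G,\hat{W}_{sep}\otimes K^*)$; feeding $\Z[G]\otimes K^*$ into the long exact sequence attached to the second collapses it to $H^2(G,\hat{W}_{sep}\otimes K^*)\simeq H^3(G,K^*)$. Composing the three isomorphisms gives $H^1(F,T^\circ)\simeq H^3(G,K^*)$.

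The two long exact sequences and the vanishing from Shapiro's lemma are routine; the step deserving the most care is the duality bookkeeping in the first paragraph, namely identifying $T^\circ(K)$ with $\hat{T}_{sep}\otimes_{\Z}K^*$ (and not with the cocharacter lattice of $T$ tensored with $K^*$), and fixing the direction of the character-lattice sequence so that $\hat{W}_{sep}$ is the submodule and $\hat{T}_{sep}$ the quotient. Once these are pinned down the argument is formal, and I expect no genuine obstacle: notably $r$ never enters the final answer, since the factor $(\,\cdot\,)^r$ multiplies only a module whose positive-degree cohomology already vanishes.
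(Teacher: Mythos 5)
Your proof is correct and is essentially the paper's argument: after reducing to $H^1(G,T^\circ(K))$ and identifying $T^\circ(K)$ with $\hat{T}_{sep}\otimes_{\Z}K^*$, your two dimension shifts use exactly the two exact sequences the paper uses (the $K$-points of $1\to W^\circ\to P^\circ\to T^\circ\to 1$ and of $1\to\mathbb G_m\to R_{K/F}\mathbb G_m\to W^\circ\to 1$), with the same Shapiro/induced-module vanishing. The only difference is presentational: you spell out the inflation--restriction step and the duality bookkeeping that the paper leaves implicit.
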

\begin{proof}
Firstly, we have an isomorphism $H^1(F,T^\circ)\simeq H^1(G,T^\circ(K))$. 
The following exact sequence is derived from (\ref{seq:flasque}):
$$
1\longrightarrow W^\circ(K)\longrightarrow P^\circ(K)\longrightarrow T^\circ(K)\longrightarrow 1.
$$
One has $H^i(G,P^\circ(K))=0$ for $i>0$ since $R_{K/F}(\mathbb G_{m,K})(K)\simeq \mathbb Z[G]\otimes_{\mathbb Z}K^*$ is an induced module. 
Hence we obtain an isomorphism $ H^1(G,T^\circ(K))\simeq H^2(G,W^\circ(K))$. 
On the other hand, the following sequence is exact 
$$
1\longrightarrow\mathbb  G_{m}(K)\longrightarrow R_{K/F}(\mathbb G_{m,K})(K)\longrightarrow W^\circ(K)\longrightarrow 1. 
$$
The proof then follows from  $$H^1(F,T^\circ)\simeq H^1(G,T^\circ(K))\simeq H^2(G,W^\circ(K))\simeq H^3(G,K^*).\qedhere$$
\end{proof}

From the flasque resolution (\ref{seq:flasque}) of $W=R^{(1)}_{K/F}(\mathbb G_{m,K})$, we can obtain the following exact sequence of dual $\Gamma$-modules
\begin{equation}
1\longrightarrow \hat{W}_{sep}\longrightarrow \hat{P}_{sep}\longrightarrow \hat{T}_{sep}\longrightarrow 1.\label{exactseq}
\end{equation}
The part $H^0(F,S^2(\hat{T}_{sep}))/\Dec$ in Theorem \ref{mainlem} is actually only dependent on the group $\mathrm{Gal}(K/F)$. 
To prove this, we need several lemmas. 

\begin{lem}
Let $N$ be the kernel of the natural surjective homomorphism $S^2(\hat{P}_{sep})\to S^2(\hat{T}_{sep})$, then the sequence 
$$
0\rightarrow\mathbb\wedge^2\hat{W}_{sep}  \stackrel{f}{\rightarrow} \hat{W}_{sep}\otimes\hat{P}_{sep}\stackrel{f'}{\rightarrow}  N\rightarrow 0
$$
is exact where $f(a\wedge b)=a\otimes b-b\otimes a$ and $f'$ is induced by $\hat{W}_{sep}\otimes\hat{P}_{sep}\longrightarrow S^2(\hat{P}_{sep})$.
\end{lem}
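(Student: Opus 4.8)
The plan is to reduce everything to the short exact sequence of $\Gamma$-lattices $0\to \hat{W}_{sep}\to \hat{P}_{sep}\to \hat{T}_{sep}\to 0$ coming from (\ref{exactseq}), exploiting that all three modules are free over $\mathbb{Z}$ (indeed $\hat{P}_{sep}$ is a permutation module and $\hat{T}_{sep}$ is a lattice). Since $f$ and $f'$ are given by natural, hence $\Gamma$-equivariant, formulas, exactness as $\Gamma$-modules is the same as exactness of the underlying abelian groups, so I would forget the Galois action and argue with abelian groups. In particular I would fix a $\mathbb{Z}$-linear splitting $\hat{P}_{sep}\cong \hat{W}_{sep}\oplus \hat{T}_{sep}$, which exists because $\hat{T}_{sep}$ is free. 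Throughout I abbreviate $U=\hat{W}_{sep}$ and $V=\hat{T}_{sep}$.

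First I would pin down $N$ intrinsically. Viewing $S^2$ as the degree-two piece of the symmetric algebra, the surjection $S^\bullet(\hat{P}_{sep})\to S^\bullet(V)$ is a map of graded rings whose kernel is the ideal generated by $U$ in degree one; its degree-two part is $U\cdot \hat{P}_{sep}$, the image of the multiplication map $U\otimes \hat{P}_{sep}\to S^2(\hat{P}_{sep})$. This map is exactly $f'$, so $N=\operatorname{im} f'$ and $f'$ is surjective with no further work. The relation $f'\circ f=0$ is immediate, since $f'(a\otimes b-b\otimes a)=ab-ba=0$ in $S^2(\hat{P}_{sep})$.

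Next I would compute $\ker f'$ using the splitting. It gives $U\otimes \hat{P}_{sep}=(U\otimes U)\oplus(U\otimes V)$ and $S^2(\hat{P}_{sep})=S^2 U\oplus(U\otimes V)\oplus S^2 V$, and under these identifications $f'$ is the direct sum of the canonical quotient $q\colon U\otimes U\to S^2 U$ and the identity on $U\otimes V$; hence $\ker f'=\ker q$. The key input is the integral exact sequence
\[
0\to \wedge^2 U\xrightarrow{f} U\otimes U\xrightarrow{q} S^2 U\to 0 ,
\]
which I would verify on a $\mathbb{Z}$-basis $e_1,\dots,e_n$ of $U$: the elements $e_i\otimes e_j-e_j\otimes e_i$ with $i<j$ form a basis of $\ker q$ and are precisely the images $f(e_i\wedge e_j)$, so $f$ is injective with image $\ker q$. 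Combining, $\ker f'=\operatorname{im} f$ and $f$ is injective, which yields exactness of the whole three-term sequence.

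The main obstacle is the integral coefficients. Over a field of characteristic $\neq 2$ one could split $U\otimes U=S^2 U\oplus \wedge^2 U$ and the statement would be almost formal, but over $\mathbb{Z}$ the sequence $0\to\wedge^2 U\to U\otimes U\to S^2 U\to 0$ is exact yet \emph{not} split (the symmetrization section introduces a factor of $2$). I would therefore avoid any direct-sum decomposition of $U\otimes U$ into symmetric and alternating parts and instead check on an explicit $\mathbb{Z}$-basis that $f$ is injective with image exactly $\ker q$; everything else, namely the behaviour of $S^2$ on a split sequence and the surjectivity of $f'$ onto $N$, is robust over $\mathbb{Z}$ and uses only that the lattices involved are free.
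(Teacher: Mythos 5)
Your proof is correct, but it takes a genuinely different route from the paper's. The paper treats exactness at $\wedge^2\hat{W}_{sep}$ and at $\hat{W}_{sep}\otimes\hat{P}_{sep}$ as obvious and concentrates entirely on the surjectivity of $f'$, which it proves by a diagram chase: choosing generators $\beta_i$ of $\hat{T}_{sep}$ with $\mathbb{Z}$-independent liftings $\alpha_i\in\hat{P}_{sep}$, taking $\alpha\in\hat{P}_{sep}^{\otimes 2}$ that dies in $S^2(\hat{T}_{sep})$, noting that its image in $\hat{T}_{sep}^{\otimes 2}$ is a combination of the antisymmetric tensors $\beta_i\otimes\beta_j-\beta_j\otimes\beta_i$, and lifting that relation back to write $\alpha$, modulo antisymmetric tensors of $\hat{P}_{sep}$, as an element of $\hat{W}_{sep}\otimes\hat{P}_{sep}+\hat{P}_{sep}\otimes\hat{W}_{sep}$. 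You instead get surjectivity essentially for free from the identification of $\ker\bigl(S^\bullet(\hat{P}_{sep})\to S^\bullet(\hat{T}_{sep})\bigr)$ with the ideal generated by $\hat{W}_{sep}$ (valid over $\mathbb{Z}$ by the universal property of the symmetric algebra), and you put the real work into the kernel of $f'$: forgetting the $\Gamma$-action, choosing a $\mathbb{Z}$-linear splitting $\hat{P}_{sep}\cong \hat{W}_{sep}\oplus\hat{T}_{sep}$ (legitimate since exactness is a statement about underlying abelian groups and $\hat{T}_{sep}$ is free), and reducing to the integral sequence $0\to\wedge^2U\to U^{\otimes 2}\to S^2U\to 0$ checked on a basis. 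Your approach buys a complete verification of the two exactness claims the paper waves off, and your insistence on avoiding the (non-split over $\mathbb{Z}$) symmetric/alternating decomposition of $U^{\otimes 2}$ is exactly the right precaution; the paper's chase, for its part, never needs the decomposition $S^2(U\oplus V)\cong S^2U\oplus(U\otimes V)\oplus S^2V$ or any choice of splitting, but it leaves the kernel computation to the reader.
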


\begin{proof}
The exactness at $\wedge^2\hat{W}_{sep}$ and $\hat{W}_{sep}\otimes\hat{P}_{sep}$ is obvious. We only need to show that $f'$ is surjective.  
Assume that $\hat{T}_{sep}=<\beta_1,\cdots,\beta_t>$, and $\beta_i$ is the image of $\alpha_i\in\hat{P}_{sep}$, so $\alpha_1,\cdots,\alpha_t$ is $\mathbb Z$-linear independent in $\hat{P}_{sep}$. 
Consider the following commutative diagram:
\[
\begin{tikzcd}
\hat{P}_{sep}\otimes\hat{P}_{sep}\arrow[r,"g"]\arrow[d,"h"]&\hat{T}_{sep}\otimes\hat{T}_{sep}\arrow[d,"h'"]\\
S^2(\hat{P}_{sep})\arrow[r,"g'"]&S^2(\hat{T}_{sep})
\end{tikzcd}
\]
where $h$, $h'$ are the natural homomorphisms and $g$, $g'$ are induced by the exact sequence (\ref{exactseq}).
It is not difficult to see that $g$, $g'$, $h$, $h'$ are surjective.
If $\alpha\in \hat{P}_{sep}\otimes\hat{P}_{sep}$ such that $g'\circ h(\alpha)=0\in S^2(\hat{T}_{sep}),$ this implies $$ g(\alpha)=\sum_{i<j}b_{ij}(\beta_i\otimes\beta_j-\beta_j\otimes\beta_i)\in \hat{T}_{sep}\otimes\hat{T}_{sep}, b_{ij}\in \mathbb Z.$$
Hence $$\alpha=\sum_{i<j}b_{ij}((\alpha_i+\omega_i)\otimes(\alpha_j+\omega_j)-(\alpha_j+\omega_j')\otimes(\alpha_i+\omega_i')), \ b_{ij}\in \mathbb Z,$$ where $\omega_i, \omega_i', \omega_j, \omega_j'\in \hat{W}_{sep}$. Then we can deduce $$\alpha=\alpha'+\alpha'',\alpha'\in \{a\otimes b-b\otimes a|a,b\in \hat{P}_{sep}\},\alpha''\in \hat{W}_{sep}\otimes\hat{P}_{sep}+\hat{P}_{sep}\otimes\hat{W}_{sep}.$$ Therefore, the image of $\alpha$ in $S^2(\hat{P}_{sep})$ comes from $\hat{W}_{sep}\otimes\hat{P}_{sep}$, thus $f'$ is surjective.
\end{proof}

Because $\Gamma_K$ acts trivially on $S^2(\hat{P}_{sep})$, we can only consider the  $G$-action:  $S^2(\hat{T}_{sep})^{\Gamma} = S^2(\hat{T}_{sep})^G$.
The short exact sequence
$
0\longrightarrow N\longrightarrow S^2(\hat{P}_{sep}) \stackrel{g'}{\longrightarrow} S^2(\hat{T}_{sep})\longrightarrow 0
$ induce an exact sequence:
\begin{equation}
S^2(\hat{P}_{sep})^G \stackrel{g'}{\longrightarrow} S^2(\hat{T}_{sep})^G\stackrel{\delta}{\longrightarrow} H^1(G,N)\longrightarrow H^1(G,S^2(\hat{P}_{sep})).\label{dec}
\end{equation}

\begin{lem}\label{duichen}
If $G$ is a finite group and $p$ is an odd prime number, then $$H^i(G,S^2(\mathbb Z[G]))\{p\}=H^i(G,\wedge^2\mathbb Z[G])\{p\}= 0$$ for $i\ge1$. Furthermore, if $G$ has odd order, then $$H^i(G,S^2(\mathbb Z[G]))=H^i(G,\wedge^2\mathbb Z[G])= 0$$ for $i\ge1$.
\end{lem}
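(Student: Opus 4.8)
The plan is to deduce both statements from a single fact: the diagonal tensor square of the regular representation is an induced $G$-module, hence acyclic in positive degrees, and after inverting $2$ it splits $G$-equivariantly into its symmetric and exterior parts.

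First I would record, for any free $\mathbb{Z}$-module $M$, the $G$-equivariant short exact sequence
$$0\longrightarrow \wedge^2 M\longrightarrow M\otimes M\longrightarrow S^2 M\longrightarrow 0,$$
where the first map sends $a\wedge b$ to $a\otimes b-b\otimes a$ and the second is the canonical projection; for $M=\mathbb{Z}[G]$ with the diagonal $G$-action this is functorial, hence $G$-equivariant. The decisive computation is that $\mathbb{Z}[G]\otimes_{\mathbb{Z}}\mathbb{Z}[G]$ with diagonal action is induced: the diagonal action on the set $G\times G$ is free with orbit representatives $\{(1,y):y\in G\}$ (the orbit of $(x,y)$ contains $(1,x^{-1}y)$), so $\mathbb{Z}[G]\otimes\mathbb{Z}[G]\cong \mathbb{Z}[G]^{|G|}$ as $G$-modules. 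By Shapiro's lemma this forces $H^i(G,\mathbb{Z}[G]\otimes\mathbb{Z}[G])=0$ for all $i\ge1$.

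Next I would invert $2$. Over $R=\mathbb{Z}[1/2]$ the swap involution $\tau$ on $M\otimes M$ commutes with the diagonal action, so the idempotents $\tfrac12(1\pm\tau)$ are $G$-equivariant and give a splitting $M_R\otimes_R M_R\cong S^2 M_R\oplus \wedge^2 M_R$ of $R[G]$-modules, where $S^2 M_R=S^2 M\otimes R$ and $\wedge^2 M_R=\wedge^2 M\otimes R$ since these functors commute with flat base change. Because $-\otimes\mathbb{Z}[1/2]$ is exact and commutes with group cohomology, and the left-hand side is acyclic in positive degrees, I conclude $H^i(G,S^2\mathbb{Z}[G])[1/2]=H^i(G,\wedge^2\mathbb{Z}[G])[1/2]=0$ for $i\ge1$. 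As these cohomology groups are finite (they are finitely generated and killed by $|G|$), inverting $2$ removes precisely the $2$-primary part, so their vanishing after inverting $2$ is exactly the vanishing of every odd primary component; this gives the first assertion. For the odd-order case, $H^i(G,-)$ in degrees $i\ge1$ is annihilated by the odd number $|G|$, so these groups have odd order, inverting $2$ is then harmless, and the same computation yields the full vanishing.

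The only role played by the hypothesis is the equivariant splitting of $M\otimes M$ into its symmetric and antisymmetric summands, which needs $2$ to be invertible; this is the one step that genuinely requires ``$p$ odd'' or ``$|G|$ odd'', and I expect it to be the crux rather than a technical difficulty, the remaining ingredients being the freeness of the diagonal action and flat base change for cohomology.
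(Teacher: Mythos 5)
Your proof is correct and rests on the same two facts as the paper's: $\mathbb Z[G]\otimes\mathbb Z[G]$ with the diagonal action is an induced module, hence acyclic in positive degrees, and the symmetric and exterior squares differ from the full tensor square by a factor of $2$. The paper packages the second fact integrally --- the composites $S^2(\mathbb Z[G])\to\mathbb Z[G]^{\otimes2}\to S^2(\mathbb Z[G])$ and $\wedge^2\mathbb Z[G]\to\mathbb Z[G]^{\otimes2}\to\wedge^2\mathbb Z[G]$ are multiplication by $2$, so $2$ annihilates the cohomology in degrees $i\ge1$ --- whereas you invert $2$ first and use the idempotents $\tfrac12(1\pm\tau)$; the two arguments are interchangeable here, the paper's version merely yielding the marginally sharper conclusion $2\cdot H^i=0$.
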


\begin{proof}
We have a commutative diagram with the row exact:
\[
\begin{tikzcd}
&&S^2(\mathbb Z[G])\arrow[r,equal]\arrow[d,"i_1"]&S^2(\mathbb Z[G])\arrow[d,"\times 2"]&\\
0\arrow[r]&\wedge^2\mathbb Z[G]\arrow[r,"i_2"]\arrow[d,"\times 2"]&\mathbb Z[G]^{\otimes 2}\arrow[r,"p_1"]\arrow[d,"p_2"]&S^2(\mathbb Z[G])\arrow[r]&0\\
&\wedge^2\mathbb Z[G]\arrow[r,equal]&\wedge^2\mathbb Z[G]&&
\end{tikzcd}
\]
where $i_1(a\cdot b)=a\otimes b+b\otimes a$, $i_2(a\wedge b)=a\otimes b-b\otimes a$, $p_1(a\otimes b)=a\cdot b$ and $p_2(a\otimes b)=a\wedge b$.

Therefore, this induces a commutative diagram with the row exact for $i\ge1$:
\[
\begin{tikzcd}
&H^i(G,S^2(\mathbb Z[G]))\arrow[r,equal]\arrow[d,"i_1^*"]&H^i(G,S^2(\mathbb Z[G]))\arrow[d,"\times2"]\\
H^i(G,\wedge^2\mathbb Z[G])\arrow[r,"i_2^*"]\arrow[d,"\times2"]&H^i(G,\mathbb Z[G]^{\otimes 2})\arrow[r,"p_1^*"]\arrow[d,"p_2^*"]&H^i(G,S^2(\mathbb Z[G]))\\
H^i(G,\wedge^2\mathbb Z[G])\arrow[r,equal]&H^i(G,\wedge^2\mathbb Z[G])&
\end{tikzcd}
\]
Since $H^i(G,\mathbb Z[G]^{\otimes 2})=0$, one obtains $2\cdot  H^i(G,S^2(\mathbb Z[G])) =2\cdot  H^i(G,\wedge^2\mathbb Z[G])= 0.$
This implies $H^i(G,S^2(\mathbb Z[G]))\{p\}=H^i(G,\wedge^2\mathbb Z[G])\{p\}= 0$ for $i\ge 1$ and  odd prime $p$. If $G$ has odd order, then $H^i(G,S^2(\mathbb Z[G]))\{p\}$ and $H^i(G,\wedge^2\mathbb Z[G])\{p\}$ have odd order, the proof then follows.
\end{proof}

\begin{lem}\label{invertibletorus}
	Let $P$ be the quasisplit torus, then $S^2(\hat{P}_{sep})^G/\Dec=0$.
\end{lem}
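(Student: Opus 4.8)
The plan is to exploit that $\hat{P}_{sep}=\mathbb Z[G]^{\,r}$ is a permutation $G$-module and to realize every $G$-invariant of its symmetric square as an explicit combination of products of invariants and quadratic transfers. Writing $\mathbb Z[G]^{\,r}=\bigoplus_{k=1}^r\mathbb Z[G]u_k$ for the standard generators $u_1,\dots,u_r$, I would first decompose
$$
S^2(\hat{P}_{sep})=\bigoplus_{k}S^2(\mathbb Z[G]u_k)\ \oplus\ \bigoplus_{k<l}\big(\mathbb Z[G]u_k\otimes\mathbb Z[G]u_l\big)
$$
as $G$-modules. Each summand is again a permutation module: $S^2(\mathbb Z[G])$ has $\mathbb Z$-basis the unordered products $g\cdot h$ and $\mathbb Z[G]\otimes\mathbb Z[G]$ the ordered pairs, both permuted by the diagonal $G$-action. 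Hence $S^2(\hat{P}_{sep})^G$ is free on orbit sums, which are of three kinds: the diagonal sum $D=\sum_{g\in G}g\cdot g$ and the off-diagonal sums $O_t$ (the orbit sum of $\{1,t\}$) inside each $S^2(\mathbb Z[G]u_k)$, and the tensor sums $\sum_{g\in G}gu_k\otimes gt\,u_l$ inside each cross term. Since $\Dec\subseteq S^2(\hat{P}_{sep})^G$ is automatic, the lemma reduces to showing that every such generator lies in $\Dec$.

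The two tools I would use are as follows. First, for invariant elements the products $\big(\sum_g g\big)u_k\cdot\big(\sum_g g\big)u_l$ lie in $\big(\hat P_{sep}^{\,G}\big)^2\subseteq\Dec$. Second, I would apply the quadratic transfer from the trivial subgroup (i.e.\ $\Gamma'=\Gamma_K$), $\Qtr_{\{1\}}$, to basis vectors and polarize: for the associated symmetric bilinear form $B$ one gets identities of the shape $B(u_k,t\,u_l)=\big(\sum_g g\big)u_k\cdot\big(\sum_g g\big)u_l-(\text{orbit sum})$. Carried out in the cross terms this produces every tensor orbit sum; carried out inside a single $S^2(\mathbb Z[G])$ it yields $E:=\Qtr_{\{1\}}(1)=\sum_{g\neq h}g\cdot h\in\Dec$, the relation $\big(\sum_g g\big)^2=D+2E$ (so that $D\in\Dec$), and, for every $t$ with $t^2\neq 1$, the equality $\sum_{g}g\cdot gt=O_t$, whence $O_t\in\Dec$. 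Thus all generators except those attached to involutions are accounted for.

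The single obstacle is the involution case. When $t^2=1$ the polarization above only gives $\sum_g g\cdot gt=2O_t$, since each unordered pair $\{g,gt\}$ is hit twice, so it does not by itself place $O_t$ in $\Dec$; this is exactly the parity phenomenon behind the odd/even dichotomy of Lemma \ref{duichen}. I would remove it with a genuinely nontrivial transfer: for an involution $t$ take $H=\langle t\rangle$ of order $2$ and apply $\Qtr_{H}$ to $1+t\in\hat P_{sep}^{\,H}$. Expanding over left coset representatives of $H$ and using that the coset sums $\sigma_i(1+t)$ add up to $\sum_g g$, one finds $\Qtr_H(1+t)=E-O_t$, so that $O_t=E-\Qtr_H(1+t)\in\Dec$. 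With the involution sums now in $\Dec$ as well, every generator of $S^2(\hat P_{sep})^G$ is decomposable and the lemma follows. The delicate part throughout is the bookkeeping of the factors of $2$ coming from symmetrization in $S^2$, which is precisely why the decisive input is the order-$2$ transfer rather than the trivial-subgroup one.
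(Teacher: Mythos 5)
Your proof is correct and follows essentially the same strategy as the paper's: identify $S^2(\hat{P}_{sep})^G$ with the free group on orbit sums, obtain the generic orbit sums by polarizing $\Qtr$ over the trivial (resp.\ small) subgroup, and handle the exceptional orbits whose stabilizer jumps by index $2$ via $\Qtr$ over that order-$2$ (resp.\ index-$2$) stabilizer. The only difference is that you specialize to $\hat{P}_{sep}=\mathbb Z[G]^{r}$, where the involutions $t$ play the role of the paper's cosets $\bar\sigma$ with $[\Gamma_{\bar\sigma}:\Gamma']=2$ in the general permutation module $\mathbb Z[\Gamma/\Gamma_L]$; this suffices for the torus $P$ actually used in the flasque resolution.
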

\begin{proof}  
	By the definition of $\Dec$, we can deduce (see \cite[A-II]{m2013}):
	\begin{align*}S^2(A\otimes B)^\Gamma&\simeq  S^2(A)^\Gamma\oplus (A\otimes B)^\Gamma\oplus S^2(B)^\Gamma,\\
	\Dec(A\oplus B)&\simeq \Dec(A)\oplus \Dec(A,B)\oplus \Dec(B),
	\end{align*}
	where $\Dec(A,B)$ is the subgroup of $(A\otimes B)^{\Gamma}$ generated by elements $\Tr(a\otimes b)=\sum_{i}\sigma_i a\otimes\sigma_i b$ for all open subgroups $\Gamma'$ of $\Gamma$ and all $a\in A^{\Gamma'}$ and $b\in B^{\Gamma'}$, where $\{\sigma_i\}$ is a representative of $\Gamma/\Gamma'$. Assume that $A$ and $B$ are permutation modules, it is clear that $(A\otimes B)^{\Gamma}=\Dec(A, B)$. 
	So we can reduce to the case $\hat{P}_{sep}=\mathbb Z[\Gamma/\Gamma_L]$ by induction, where $L/F$ is a finite field extension and $\Gamma_L$ is the absolute Galois group of $L$.
	
	We denote  $\bar \sigma_1\cdot \bar \sigma_2$ to be the image of $\bar \sigma_1\otimes \bar \sigma_2$ in $S^2(\mathbb Z[\Gamma/\Gamma_L])$, where $\bar \sigma_i \in \Gamma/\Gamma_L$ and $i=1,2$. Let $<\bar \sigma_1\cdot \bar \sigma_2>$ be the orbit of $\Gamma$-action on $\bar \sigma_1\cdot \bar \sigma_2$. Let $N_{\bar \sigma}:=\sum_{a\in <\bar e\cdot \bar \sigma>}a$. It is clear that $S^2(\mathbb Z[\Gamma/\Gamma_L])^{\Gamma}$ is generated by all $N_{\bar \sigma},\ \bar \sigma\in \Gamma/\Gamma_L$. It suffices to show $N_{\bar \sigma}\in \Dec(\mathbb Z[\Gamma/\Gamma_L])$.
	
	Let $\Gamma'=\Gamma_L\cap \Gamma_{\sigma(L)}$ and $\Gamma_{\bar \sigma}$ the stabilizer of $\bar e\cdot \bar \sigma  $, which is an open subgroup of $\Gamma$ containing $\Gamma'$. 
\begin{lem}\label{lem:stabilizer} 
	Either  $\Gamma_{\bar \sigma}=\Gamma'$ or $\Gamma_{\bar \sigma}=\Gamma'\cup \sigma'\Gamma' $, where $\sigma'\Gamma_L= \sigma\Gamma_L$ and $\sigma'\not\in \Gamma'$.
\end{lem}	
\begin{proof} We only need to prove the case $\Gamma_{\bar \sigma}\neq\Gamma'$.
 Assume that $\sigma',\sigma''\in \Gamma_{\bar \sigma}\setminus\Gamma'$. So ${\sigma'\bar\sigma}={\sigma''\bar\sigma}=\bar e$	 and $\bar \sigma'=\bar \sigma''=\bar \sigma$. 
 Then $\sigma'^{-1}\sigma''\in \Gamma_{\sigma(L)}\cap \Gamma_L=\Gamma'$, hence $\sigma'\Gamma'=\sigma''\Gamma'$.\qedhere	
\end{proof}

Let $\bar \sigma\in \Gamma/ \Gamma_L$ and $\sigma_1,\cdots,\sigma_m$ representatives of $\Gamma/\Gamma'$.
We have \begin{align*}
	\Qtr_{\Gamma'}(\bar e+\bar \sigma)&=\sum_{i<j}\sigma_i(\bar e+\bar \sigma)\cdot\sigma_i(\bar e+\bar \sigma)\\
	&=\sum_{i<j}\bar \sigma_i\cdot \bar \sigma_j+\sum_{i<j}{\sigma_i\bar \sigma}\cdot {\sigma_j\bar \sigma}+\sum_{i<j}\bar \sigma_i\cdot  {\sigma_j\bar\sigma}+\sum_{i>j}\bar \sigma_i\cdot  {\sigma_j\bar\sigma}\\
&=\Qtr_{\Gamma'}(\bar e)+\Qtr_{\Gamma'}(\bar \sigma)+\mathrm {Tr}(\bar e)\cdot \mathrm {Tr}(\bar e)-\sum_{i}\bar\sigma_i\cdot\sigma_i\bar\sigma.
\end{align*}
Therefore, $\sum_{i}\bar\sigma_i\cdot\sigma_i\bar\sigma\in \Dec(\Bbb Z[\Gamma/\Gamma_L])$. If $\Gamma_{\bar \sigma}=\Gamma'$, then 
$N_{\bar \sigma}=\sum_{i}\bar\sigma_i\cdot\sigma_i\bar\sigma\in \Dec(\mathbb Z[\Gamma/\Gamma_L])$.

Suppose $[\Gamma_{\bar \sigma}:\Gamma']=2$. Let $\sigma'\in \Gamma_{\bar \sigma}\setminus \Gamma'$ and $\sigma_1,\cdots,\sigma_m$  representatives of $\Gamma/\Gamma_{\bar \sigma}$, hence $\sigma_1,\cdots,\sigma_m,\sigma_1\sigma',\cdots,\sigma_m\sigma'$ are  representatives of $\Gamma/\Gamma_{\bar \sigma}$. Since $\bar \sigma'=\bar\sigma\in \Gamma/\Gamma_L$ by Lemma~\ref{lem:stabilizer}, one has $\bar e+\bar \sigma'=\bar e+\bar \sigma\in \mathbb Z[\Gamma/\Gamma_L]^{\Gamma_{\bar\sigma}}$, hence
\begin{align*}
	\Qtr_{\Gamma_{\bar \sigma}}(\bar e+\bar \sigma')&=\sum_{i<j}\sigma_i(\bar e+\bar \sigma')\cdot \sigma_j(\bar e+\bar \sigma')\\
	&=\sum_{i<j}\bar \sigma_i\cdot \bar \sigma_j+\sum_{i<j}{\sigma_i\bar \sigma'}\cdot {\sigma_j\bar \sigma'}+\sum_{i<j}\bar \sigma_i\cdot  {\sigma_j\bar\sigma'}+\sum_{i>j}\bar \sigma_i\cdot  {\sigma_j\bar\sigma'}\\
&=\Qtr_{\Gamma'}(\bar e)-\sum_{i}\bar\sigma_i\cdot{\sigma_i\bar \sigma},
\end{align*}
which implies $N_{\bar \sigma}=\sum_{i}\bar\sigma_i\cdot{\sigma_i\bar \sigma}\in \Dec(\mathbb Z[\Gamma/\Gamma_L])$, the proof of Lemma \ref{lem:stabilizer} then follows.
\end{proof}

\begin{rem*}
	Let  $S=\mathrm{Spec}\ F$ be the trivial torus.
	Then $
	1\longrightarrow P\longrightarrow P\longrightarrow S\longrightarrow 1$ 
	is a flasque resolution of $S$. Let $p$ be an odd prime. Therefore
	$$
	0=\bar{H}^3_{nr}(F(S),\mathbb Q/\mathbb Z(2))\{p\}\simeq H^1(F,P^\circ)\{p\}\oplus (H^0(F,S^2(\hat{P}_{sep}))/\Dec)\{p\}
	$$
	by Theorem \ref{mainlem}.
	Hence $(S^2(\hat{P}_{sep})^G/\Dec)\{p\}=0$.
	However, the $2$-primary part cannot be determined by this method.
\end{rem*}

\begin{lem}\label{commutator}
If $\#[G,G]$ is a power of $2$, then for an odd prime number $p$
$$(H^0(F,S^2(\hat{T}_{sep}))/\Dec)\{p\}\simeq  (H^1(G,N)/\delta((\hat{T}_{sep}^G)^{\otimes2}))\{p\},$$ where $N,\delta$ are defined in (\ref{dec}).
\end{lem}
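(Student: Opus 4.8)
First I would extract the statement from the exact sequence (\ref{dec}) by killing the image of $g'$ on both sides. By Lemma \ref{invertibletorus} one has $S^2(\hat P_{sep})^G=\Dec$, and $\Dec$ is functorial for the $G$-equivariant surjection $\hat P_{sep}\to\hat T_{sep}$: the map $g'$ carries the generators $(\hat P_{sep}^G)^2$ and $\Qtr_{\Gamma'}(\hat P_{sep}^{\Gamma'})$ of $\Dec$ onto $(\hat T_{sep}^G)^2$ and $\Qtr_{\Gamma'}(\hat T_{sep}^{\Gamma'})$. Hence $\IM(g')=g'(\Dec)\subseteq\Dec$ inside $S^2(\hat T_{sep})^G$, and since $\ker\delta=\IM(g')$ by (\ref{dec}), $\delta$ induces an isomorphism $S^2(\hat T_{sep})^G/\Dec\simeq\delta(S^2(\hat T_{sep})^G)/\delta(\Dec)$, all groups being finite because $H^1(G,N)$ is. Write $D_0$ for the image of $(\hat T_{sep}^G)^{\otimes 2}$ in $S^2(\hat T_{sep})^G$, so $\delta(D_0)=\delta((\hat T_{sep}^G)^{\otimes 2})$ and $D_0\subseteq\Dec$.

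Next I would check that $\delta$ is onto on odd $p$-primary parts. Since $\hat P_{sep}=\mathbb Z[G]^r$, the decomposition $S^2(\oplus_i\mathbb Z[G])\simeq\oplus_i S^2(\mathbb Z[G])\oplus\oplus_{i<j}\mathbb Z[G]^{\otimes 2}$, together with Lemma \ref{duichen} and $H^1(G,\mathbb Z[G]^{\otimes 2})=0$, gives $H^1(G,S^2(\hat P_{sep}))\{p\}=0$ for odd $p$. Feeding this into the tail of (\ref{dec}) shows $H^1(G,N)\{p\}\subseteq\delta(S^2(\hat T_{sep})^G)$, i.e. $(H^1(G,N)/\delta(S^2(\hat T_{sep})^G))\{p\}=0$.

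The crux is to prove $\delta(\Dec)\{p\}=\delta(D_0)\{p\}$ for odd $p$, i.e. that the $\Qtr$-generators add nothing beyond $D_0$. For an open $\Gamma'$ with image $H\leq G$ and $a\in\hat T_{sep}^H$, expanding $N_{G/H}(a)\cdot N_{G/H}(a)$ in $S^2(\hat T_{sep})$, where $N_{G/H}(a)=\sum_i\sigma_i a\in\hat T_{sep}^G$, gives $N_{G/H}(a)^2=\sum_i(\sigma_i a)^2+2\Qtr_H(a)$, so $2\Qtr_H(a)\equiv -N_{G/H}(a\cdot a)\pmod{D_0}$. As $2$ is invertible on odd $p$-parts, it is enough to show $\delta(N_{G/H}(a\cdot a))\in\delta(D_0)$ there. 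Using that corestriction commutes with connecting maps, $\delta(N_{G/H}(a\cdot a))=\mathrm{cor}^G_H(\delta_H(a\cdot a))$, and a cocycle computation identifies $\delta_H(a\cdot a)$, under the isomorphism $H^1(G,N)\simeq H^2(G,\wedge^2\hat W_{sep})$ coming from $0\to\wedge^2\hat W_{sep}\to\hat W_{sep}\otimes\hat P_{sep}\to N\to 0$ (the middle term is induced, so its higher cohomology vanishes), with the self-cup $-\beta_H(a)\cup\beta_H(a)$ for the pairing $\hat W_{sep}\otimes\hat W_{sep}\to\wedge^2\hat W_{sep}$, where $\beta_H(a)\in H^1(H,\hat W_{sep})\simeq\mathrm{Hom}(H,\mathbb Q/\mathbb Z)$ is the image of $a$ under the connecting map. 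The problem then becomes the group-cohomological assertion that, on odd $p$-parts, $\mathrm{cor}^G_H(\chi\cup\chi)$ lies in the subgroup generated by self-cups of characters of $G$. This is exactly where $\#[G,G]$ a power of $2$ is used: it forces every Sylow $p$-subgroup $G_p$ ($p$ odd) to be abelian, so via the split injection $H^2(G,-)\{p\}\hookrightarrow H^2(G_p,-)$ given by restriction (retracted by corestriction up to the prime-to-$p$ index $[G:G_p]$) one reduces to abelian $G_p$, where these cup products can be evaluated by hand. I expect controlling these corestrictions of self-cups modulo decomposables to be the main difficulty.

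Finally I would assemble the pieces. Taking $p$-primary parts is exact on finite abelian groups, so the reduction step gives $(S^2(\hat T_{sep})^G/\Dec)\{p\}\simeq\delta(S^2(\hat T_{sep})^G)\{p\}/\delta(\Dec)\{p\}$. Substituting $\delta(S^2(\hat T_{sep})^G)\{p\}=H^1(G,N)\{p\}$ from the surjectivity step and $\delta(\Dec)\{p\}=\delta(D_0)\{p\}$ from the crux step yields $(S^2(\hat T_{sep})^G/\Dec)\{p\}\simeq (H^1(G,N)/\delta(D_0))\{p\}$, which is the assertion.
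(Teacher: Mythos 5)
Your framework is sound where it overlaps with the paper: by Lemma \ref{invertibletorus} one has $\mathrm{im}(g')=g'(\Dec(\hat P_{sep}))\subseteq\Dec(\hat T_{sep})$, and $\delta$ is surjective on odd $p$-parts because $H^1(G,S^2(\hat P_{sep}))\{p\}=0$ follows from Lemma \ref{duichen} together with the induced summands $\mathbb Z[G]^{\otimes 2}$. You have also correctly isolated the crux: one must show that, on odd $p$-parts, the generators $\Qtr_H(a)$ for $a\in\hat T_{sep}^H$ contribute nothing modulo $g'(\Dec(\hat P_{sep}))+(\hat T_{sep}^G)^{2}$. But this step is not actually proved in your proposal. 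You reduce $2\Qtr_H(a)$ to $-N_{G/H}(a\cdot a)$ modulo $D_0$, translate the problem into corestrictions of self-cup-products of characters, and then state that you ``expect controlling these corestrictions of self-cups modulo decomposables to be the main difficulty.'' That difficulty \emph{is} the lemma, and it is the only place the hypothesis on $\#[G,G]$ enters. Moreover, the Sylow reduction you sketch does not obviously close the gap: even granting that $G_p$ is abelian and that $\mathrm{res}_{G_p}\,\mathrm{cor}^G_H(\chi\cup\chi)$ is decomposable in $H^4(G_p,\mathbb Z)$, corestricting back to $G$ does not land in the subgroup generated by cup products of classes of $H^2(G,\mathbb Z)$ unless each factor is itself a restriction from $G$; the projection formula only applies to such factors, and the double-coset formula alone does not supply them.

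The missing idea is the following, and it is how the paper uses the hypothesis. Since $H^1(G,\hat W_{sep})\simeq H^2(G,\mathbb Z)\simeq H^1(G^{ab},\mathbb Q/\mathbb Z)$, and likewise for any subgroup $H$, the cokernel of the restriction $j\colon H^1(G,\hat W_{sep})\to H^1(H,\hat W_{sep})$ is a $2$-group when $[G,G]$ is. Feeding this into the long exact cohomology sequence of $1\to\hat W_{sep}\to\hat P_{sep}\to\hat T_{sep}\to 1$ yields $2^{m}\hat T_{sep}^H\subseteq\mathrm{im}(\hat P_{sep}^H)+\hat T_{sep}^G$ for some $m$. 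Writing $2^{m}a=b+c$ accordingly and expanding $\Qtr_H(b+c)=\sum_{i<j}\gamma_i(b+c)\cdot\gamma_j(b+c)$ directly gives $2^{2m}\Qtr_H(a)=\Qtr_H(b)+\binom{[G:H]}{2}\,c\cdot c+([G:H]-1)\,c\cdot\sum_i\gamma_i b\in g'(\Dec(\hat P_{sep}))+(\hat T_{sep}^G)^{2}$, which kills the $\Qtr$-generators after inverting $2$. (Your identity $2\Qtr_H(a)\equiv -N_{G/H}(a\cdot a)\pmod{D_0}$ could serve the same purpose, but proving $\delta(N_{G/H}(a\cdot a))\in\delta(D_0)\{p\}$ again requires exactly this surjectivity-up-to-$2$-torsion of restriction on characters; it does not follow from abelianness of the odd Sylow subgroups alone.) As written, the proposal therefore has a genuine gap at its central step.
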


\begin{proof}
Let $H$ be a subgroup of $G$, $G^{ab}=G/[G,G]$ and $H^{ab}=H/[H,H]$. 
Since $\#[G,G]$ is a power of $2$, the exact sequence $
0\longrightarrow\mathbb Z\stackrel{N}{\longrightarrow} \mathbb Z[G]\longrightarrow\hat{W}_{sep}\longrightarrow 0
$ deduces the following commutative diagram
\[
\begin{tikzcd}
H^1(G,\hat{W}_{sep})\arrow[r,"\simeq"]\arrow[d,"j"]&H^2(G,\mathbb Z)\arrow[r,"\simeq"]\arrow[d]&H^1(G,\mathbb Q/\mathbb Z)\arrow[d]\arrow[r,"\simeq"]&H^1(G^{ab},\mathbb Q/\mathbb Z)\arrow[d,"j'"]\\
H^1(H,\hat{W}_{sep})\arrow[r,"\simeq"]&H^2(H,\mathbb Z)\arrow[r,"\simeq"]&H^1(H,\mathbb Q/\mathbb Z)\arrow[r,"\simeq"]&H^1(H^{ab},\mathbb Q/\mathbb Z)
\end{tikzcd}
\]
Therefore, the cokernel $\mathrm{Coker}(j)\simeq \mathrm{Coker}(j')$ is a $2$-group.

The sequence (\ref{exactseq}) derives the following commutative diagram with the row  exact
\[
\begin{tikzcd}
	0\arrow[r]&\hat{W}_{sep}^G\arrow[r]\arrow[d]&\hat{P}_{sep}^G\arrow[r]\arrow[d,"r"]&\hat{T}_{sep}^G\arrow[d,"r'"]\arrow[r]&H^1(G,\hat{W}_{sep})\arrow[r]\arrow[d,"j"]&H^1(G,\hat{P}_{sep})=0\\
	0\arrow[r]&\hat{W}_{sep}^H\arrow[r]&\hat{P}_{sep}^H\arrow[r]&\hat{T}_{sep}^H\arrow[r]&H^1(H,\hat{W}_{sep})\arrow[r]&H^1(H,\hat{P}_{sep})=0.
\end{tikzcd}
\]
Since the $\mathrm{Coker}(j)$ is a $2$-group and the sequence $ \mathrm{Coker}(r)\to  \mathrm{Coker}(r') \to\mathrm{Coker} (j)\to 0$ is  exact,
it shows that
 $2^m\hat{T}_{sep}^H\subset \mathrm{im}(\hat{P}_{sep}^H)+\hat{T}_{sep}^G\subset\hat{T}_{sep}^H$ for some integer $m$.

Let $\{\gamma_i\}$ be a  representative for the left cosets of $H$ in $G$. 
If $a\in \hat{T}_{sep}^H$, then $2^ma=b+c\in \hat{T}_{sep}^H, b\in  \mathrm{im}(\hat{P}_{sep}^H),c\in\hat{T}_{sep}^G$. By the commutative diagram
\[
\begin{tikzcd}
	\hat{P}_{sep}^H\arrow[r]\arrow[d,"\Qtr"]&\hat{T}_{sep}^H\arrow[d,"\Qtr"]\\
	S^2(\hat{P}_{sep})^G\arrow[r,"g"]&S^2(\hat{T}_{sep})^G,
\end{tikzcd}
\]
one obtains 
\begin{align*}
	\Qtr(2^m a)&=\sum_{i<j}\gamma_i(b+c)\cdot\gamma_j(b+c)\\
&=\sum_{i<j}\gamma_i b\cdot\gamma_j b+\sum_{i<j}c\cdot c+lc\cdot\sum_{i}\gamma_ib,
\end{align*}
where $l=[G:H]-1$.
So  $$\Qtr(2^m\hat{T}_{sep}^H)=2^{2m}\Qtr(\hat{T}_{sep}^H)\subset g(\Dec(\hat{P}_{sep}))+(\hat{T}_{sep}^G)^{2}.$$
Let $H$ run through all subgroups of $G$, since $G$ is finite, there exists a  large enough integer $m'$ such that $2^{m'}\Dec(\hat{T}_{sep})\subset g(\Dec(\hat{P}_{sep}))+(\hat{T}_{sep}^G)^{2}$, hence
\begin{align*}
	(S^2(\hat{T}_{sep})^G/2^{2m'}\Dec(\hat{T}_{sep}))\{p\}&=  [S^2(\hat{T}_{sep})^G/(g'(\Dec(\hat{P}_{sep}))+(\hat{T}_{sep}^G)^{2})]\{p\}\\
	&= (S^2(\hat{T}_{sep})^G/\Dec(\hat{T}_{sep}))\{p\}.
\end{align*}

On the other hand,  by Lemma \ref{invertibletorus} and Lemma \ref{duichen}, 
the sequence \eqref{dec} derives the isomorphism
$(S^2(\hat{T}_{sep})^G/g(\Dec(\hat{P}_{sep})))\{p\}\stackrel{\delta}{\longrightarrow} H^1(G,N)\{p\}
$
thus $$(H^0(F,S^2(\hat{T}_{sep}))/\Dec(\hat{T}_{sep}))\{p\}\simeq  (H^1(G,N)/\delta((\hat{T}_{sep}^G)^{2}))\{p\}.\qedhere$$
\end{proof}

\begin{rem*}
	Assume that $G$ is abelian.
	Then $\Dec(\hat{T}_{sep})= g'(\Dec(\hat{P}_{sep}))+(\hat{T}_{sep}^G)^{2}$
	from the proof above. By Lemma \ref{invertibletorus} and the sequence (\ref{dec}), we deduce the following isomorphism 
	\begin{equation}
		S^2(\hat{T}_{sep})^G/\Dec\stackrel{\simeq}{\longrightarrow} H^1(G,N)/\delta((\hat{T}_{sep}^G)^{\otimes2}).\label{abelian}
	\end{equation}
\end{rem*}

Let $N'$ be the kernel of  $\hat{P}_{sep}\otimes\hat{T}_{sep}\to S^2(\hat{T}_{sep})$ and $N''$ the kernel of  $\hat{P}_{sep}\otimes\hat{P}_{sep}\to S^2(\hat{T}_{sep})$. 
Then we have the following commutative diagram with the exact rows:
\[
\begin{tikzcd}
0\arrow[r]&N\arrow[r]& S^2(\hat{P}_{sep})\arrow[r]& S^2(\hat{T}_{sep})\arrow[r]&0\\
0\arrow[r]&N''\arrow[r]\arrow[u]\arrow[d]& \hat{P}_{sep}\otimes\hat{P}_{sep}\arrow[r]\arrow[u]\arrow[d]& S^2(\hat{T}_{sep})\arrow[r]\arrow[u,equal]\arrow[d,equal]&0\\
0\arrow[r]&N'\arrow[r]& \hat{P}_{sep}\otimes\hat{T}_{sep}\arrow[r]& S^2(\hat{T}_{sep})\arrow[r]&0
\end{tikzcd}
\]

By construction $\hat{P}_{sep}\simeq \mathbb{Z}[G]^{\oplus r}$, for $i\ge 1$ and an odd prime $p$, $\hat{H}^i(G,\hat{P}_{sep}\otimes\hat{P}_{sep})=\hat{H}^i(G, \hat{P}_{sep}\otimes\hat{T}_{sep})=0$ by Shapiro's Lemma, $\hat{H}^i(G,S^2(\hat{P}_{sep}))\{p\}=0$ by Lemma \ref{duichen}. 
Therefore the following diagram is commutative
$$
\begin{tikzcd}
	&\hat{H}^1(G,N)\{p\}\\
	\hat{H}^0(G,S^2(\hat{T}_{sep}))\{p\}\arrow[dr, "\delta'", "\simeq"']\arrow[r, "\delta''", "\simeq"']\arrow[ur,"\delta", "\simeq"']&\hat{H}^1(G,N'')\{p\}\arrow[u, "\simeq"]\arrow[d, "\simeq"]\\
	&\hat{H}^1(G,N')\{p\}
\end{tikzcd}
$$
and we deduce 
\begin{align*}
(H^1(G,N)/\delta((\hat{T}_{sep}^G)^{\otimes2}))\{p\}&\simeq(H^1(G,N')/\delta'((\hat{T}_{sep}^G)^{\otimes2}))\{p\}\\
&\simeq(H^1(G,N'')/\delta''((\hat{T}_{sep}^G)^{\otimes2}))\{p\}.
\end{align*}

\begin{prop}\label{prop:cup}
Let $W=R_{K/F}^{(1)}(\mathbb G_{m,K})$ and  $K/F$ the finite Galois extension  with $G=\mathrm{Gal}(K/F)$. Let 
$$
1\longrightarrow T\longrightarrow P\longrightarrow W\longrightarrow 1
$$
be the flasque resolution (\ref{seq:flasque}) of $W$. Suppose the commutator $[G,G]$ is a $2$-group. Then
$$
(H^0(F,S^2(\hat{T}_{sep}))/\Dec)\{p\}\simeq  \mathrm{Coker}(H^2(G,\mathbb Z)\times H^2(G,\mathbb Z)\stackrel{\cup}{\longrightarrow}H^4(G,\mathbb Z))\{p\}.
$$
\end{prop}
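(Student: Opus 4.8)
The plan is to compute, in the $p$-primary part, the group $H^1(G,N)/\delta((\hat{T}_{sep}^G)^{\otimes 2})$ of Lemma~\ref{commutator} by repeatedly feeding the defining sequence $0\to\mathbb Z\xrightarrow{N_G}\mathbb Z[G]\to\hat{W}_{sep}\to 0$ through $S^2$, $\wedge^2$ and $\otimes$, and then to match the subgroup $\delta((\hat{T}_{sep}^G)^{\otimes 2})$ with the image of the cup product via the compatibility of connecting homomorphisms with cup products. By Lemma~\ref{commutator} (whose hypothesis is precisely that $[G,G]$ be a $2$-group) together with the commutative diagram of $\{p\}$-isomorphisms between $H^1(G,N)$, $H^1(G,N')$ and $H^1(G,N'')$ displayed before the proposition, it suffices to work with $N$ and $\delta$. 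Throughout I use that $\hat{H}^i(G,\hat{W}_{sep})\simeq\hat{H}^{i+1}(G,\mathbb Z)$, which follows from the defining sequence since $\mathbb Z[G]$ is cohomologically trivial.

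First I would identify $H^1(G,N)\{p\}$ with $H^4(G,\mathbb Z)\{p\}$. The exact sequence $0\to\wedge^2\hat{W}_{sep}\to\hat{W}_{sep}\otimes\hat{P}_{sep}\to N\to 0$ established above, combined with the cohomological triviality of the induced module $\hat{W}_{sep}\otimes\hat{P}_{sep}\cong(\hat{W}_{sep}\otimes\mathbb Z[G])^{\oplus r}$, gives $H^1(G,N)\simeq\hat{H}^2(G,\wedge^2\hat{W}_{sep})$. Next I claim $\hat{H}^i(G,S^2\hat{W}_{sep})\{p\}=0$ for $i\ge 1$: right-exactness of $S^2$ applied to the defining sequence yields a surjection $S^2(\mathbb Z[G])\to S^2(\hat{W}_{sep})$ whose kernel $F$, the image of $N_G\cdot\mathbb Z[G]$ in $S^2(\mathbb Z[G])$, is freely permuted by $G$ and hence $\cong\mathbb Z[G]$ is cohomologically trivial; since $\hat{H}^{\ge 1}(G,S^2(\mathbb Z[G]))\{p\}=0$ by Lemma~\ref{duichen}, the claim follows. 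Using the $\{p\}$-decomposition $\hat{W}_{sep}\otimes\hat{W}_{sep}\{p\}\simeq S^2\hat{W}_{sep}\{p\}\oplus\wedge^2\hat{W}_{sep}\{p\}$ this gives $\hat{H}^2(G,\wedge^2\hat{W}_{sep})\{p\}\simeq\hat{H}^2(G,\hat{W}_{sep}\otimes\hat{W}_{sep})\{p\}$, and tensoring the defining sequence with the free module $\hat{W}_{sep}$ produces $\hat{H}^2(G,\hat{W}_{sep}\otimes\hat{W}_{sep})\simeq\hat{H}^3(G,\hat{W}_{sep})\simeq\hat{H}^4(G,\mathbb Z)$. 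Composing, $H^1(G,N)\{p\}\simeq H^4(G,\mathbb Z)\{p\}$.

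It then remains to match $\delta((\hat{T}_{sep}^G)^{\otimes 2})$ with the image of the cup product. The connecting map $\psi\colon\hat{T}_{sep}^G\to H^1(G,\hat{W}_{sep})$ of (\ref{exactseq}) is surjective because $H^1(G,\hat{P}_{sep})=0$, so the composite $\partial\psi\colon\hat{T}_{sep}^G\to H^1(G,\hat{W}_{sep})\xrightarrow{\sim} H^2(G,\mathbb Z)$ is onto. A direct cocycle computation shows that for $a,b\in\hat{T}_{sep}^G$, lifting them to $\tilde a,\tilde b\in\hat{P}_{sep}$ and setting $w_g=g\tilde a-\tilde a,\ v_g=g\tilde b-\tilde b\in\hat{W}_{sep}$, the class $\delta(a\cdot b)$ in $H^1(G,N)\simeq\hat{H}^2(G,\wedge^2\hat{W}_{sep})$ equals the cup product $\psi(a)\cup\psi(b)$, which by graded commutativity of two degree-one classes already lies in the antisymmetric ($\wedge^2$) summand of $\hat{H}^2(G,\hat{W}_{sep}\otimes\hat{W}_{sep})$. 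Finally, the standard compatibility of cup products with the connecting maps of the two sequences used above shows that under $\hat{H}^2(G,\hat{W}_{sep}\otimes\hat{W}_{sep})\simeq H^4(G,\mathbb Z)$ one has $\psi(a)\cup\psi(b)\mapsto\partial\psi(a)\cup\partial\psi(b)$, the cup product in $H^*(G,\mathbb Z)$. Since $\partial\psi$ is surjective, $\delta((\hat{T}_{sep}^G)^{\otimes 2})$ is carried onto the image of $\cup\colon H^2(G,\mathbb Z)\times H^2(G,\mathbb Z)\to H^4(G,\mathbb Z)$, and passing to cokernels yields the proposition.

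The main obstacle is the cocycle-level identity $\delta(a\cdot b)=\pm\,\psi(a)\cup\psi(b)$: one must carefully transport the degree shift built into $H^1(G,N)\simeq\hat{H}^2(G,\wedge^2\hat{W}_{sep})$ and verify that the symmetric product $a\cdot b$ produces exactly the wedge component of $\psi(a)\cup\psi(b)$ with the correct sign. A secondary delicate point is the vanishing $\hat{H}^2(G,S^2\hat{W}_{sep})\{p\}=0$, which is what guarantees that the $\wedge^2$-summand accounts for \emph{all} of $H^4(G,\mathbb Z)\{p\}$; this is precisely where the odd prime $p$ and Lemma~\ref{duichen} enter.
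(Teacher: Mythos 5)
Your proposal is correct and follows essentially the same route as the paper: reduce to $H^1(G,N)/\delta((\hat{T}_{sep}^G)^{\otimes 2})$ via Lemma~\ref{commutator}, identify $H^1(G,N)\{p\}$ with $\hat{H}^4(G,\mathbb Z)\{p\}$ by dimension shifting with the parasitic terms killed by Lemma~\ref{duichen}, and match $\delta((\hat{T}_{sep}^G)^{\otimes 2})$ with the image of the cup product via the compatibility of connecting homomorphisms with cup products (your cocycle identity $\delta(a\cdot b)=\pm\,\psi(a)\cup\psi(b)$ does check out). The one tactical difference is the passage from $\hat{H}^2(G,\wedge^2\hat{W}_{sep})\{p\}$ to $\hat{H}^4(G,\mathbb Z)\{p\}$: you use the $p$-local splitting of $\hat{W}_{sep}^{\otimes 2}$ into symmetric and alternating parts together with the vanishing of $\hat{H}^i(G,S^2\hat{W}_{sep})\{p\}$, whereas the paper constructs the explicit exact sequence $0\to\hat{W}_{sep}\stackrel{\varphi}{\to}\wedge^2\mathbb Z[G]\to\wedge^2\hat{W}_{sep}\to 0$ with $\varphi(\bar b)=b\wedge N_G(1)$ --- both rest on Lemma~\ref{duichen} and are equivalent in substance.
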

\begin{proof}

By \cite[Chapter I, \S 4]{neukirch2013}, we have the following commutative diagram:
$$
\begin{tikzcd}
\hat{H}^0(G,\hat{T}_{sep})\arrow[d,"\simeq"]\arrow[phantom, r, description, "\times"]&\hat{H}^0(G,\hat{T}_{sep})\arrow[d,equal]\arrow[r,"\cup"]&\hat{H}^{0}(G,S^2(\hat{T}_{sep}))\arrow[d, "\delta'"]\arrow[dr, "\delta''"]\arrow[drr, "\delta"]\\
\hat{H}^{1}(G,\hat{W}_{sep})\arrow[d,equal]\arrow[phantom, r, description, "\times"]&\hat{H}^0(G,\hat{T}_{sep})\arrow[d,"\simeq"]\arrow[r,"\cup"]&\hat{H}^{1}(G,N')&\hat{H}^{1}(G,N'')\arrow[l,"\simeq"]\arrow[r,"\simeq"]&\hat{H}^{1}(G,N)\arrow[d,"\simeq"]\\
\hat{H}^1(G,\hat{W}_{sep})\arrow[d,"\simeq"]\arrow[phantom, r, description, "\times"]&\hat{H}^1(G,\hat{W}_{sep})\arrow[d,equal]\arrow[rrr,"\cup"]&&&\hat{H}^{2}(G,\wedge^2\hat{W}_{sep})\arrow[d,""]\\
\hat{H}^2(G,\mathbb Z)\arrow[d,equal]\arrow[phantom, r, description, "\times"]&\hat{H}^1(G,\hat{W}_{sep})\arrow[d,"\simeq"]\arrow[rrr,"\cup"]&&&\hat{H}^{3}(G,\hat{W}_{sep})\arrow[d,"\simeq"]\\
\hat{H}^2(G,\mathbb Z)\arrow[phantom, r, description, "\times"]&\hat{H}^2(G,\mathbb Z)\arrow[rrr,"\cup"]&&&\hat{H}^{4}(G,\mathbb Z)
\end{tikzcd}
$$

a. The first square is induced by the exact sequences:
$
1\longrightarrow \hat{W}_{sep}\longrightarrow \hat{P}_{sep}\longrightarrow \hat{T}_{sep}\longrightarrow 1
$
and
$
0\longrightarrow N'\longrightarrow\hat{P}_{sep}\otimes\hat{T}_{sep}\longrightarrow S^2(\hat{T}_{sep})\longrightarrow0
$. 
Note that $\hat{H}^0(G,\hat{P}_{sep})=\hat{H}^1(G,\hat{P}_{sep})=0$ 
thus the vertical arrows at left is an isomorphism.

b. The second square is induced by the exact sequences:
$
1\longrightarrow \hat{W}_{sep}\longrightarrow \hat{P}_{sep}\longrightarrow \hat{T}_{sep}\longrightarrow 1
$
and
$
0\longrightarrow\mathbb\wedge^2\hat{W}_{sep}  \stackrel{f}{\longrightarrow} \hat{W}_{sep}\otimes\hat{P}_{sep}\longrightarrow  N\longrightarrow 0
$. 
Note that $\hat{H}^1(G,\hat{W}_{sep}\otimes\hat{P}_{sep})=\hat{H}^2(G,\hat{W}_{sep}\otimes\hat{P}_{sep})=0$.

c. The third square is induced by the exact sequences:
$
0\longrightarrow\mathbb Z\stackrel{N_G}{\longrightarrow} \mathbb Z[G]\longrightarrow\hat{W}_{sep}\longrightarrow 0
$
and
$
0\longrightarrow\hat{W}_{sep}  \stackrel{\varphi}{\longrightarrow}\wedge^2\mathbb Z[G] \longrightarrow \wedge^2\hat{W}_{sep} \longrightarrow 0
$, where $\wedge^2\mathbb Z[G] \longrightarrow \wedge^2\hat{W}_{sep}$ are the natural surjective homomorphism.
We now define the hommomorphism $\varphi$ and prove the exactness at $\wedge^2\mathbb Z[G]$.
We write $\bar{g}\in\hat{W}_{sep}$ as the image of $g\in \Z[G]$ and $\bar{g}\wedge\bar{h}\in\wedge^2\hat{W}_{sep}$ as the image of $g\wedge h\in \wedge^2\Z[G]$.
Let $\bar{b}\in \hat{W}_{sep}$ and $b$ be an arbitrary lifting of $\bar{b}$.
We define $\varphi(\bar{b})=b\wedge N_G(1)$, and it is easy to see that the morphism $\varphi$ does not depend on the choice of the lifting. 

Assume that $G=\{g_1,\cdots,g_n\}$, therefore 
$\wedge^2\hat{W}_{sep}$ has a $\mathbb Z$-basis $\{\bar{g}_i\wedge \bar{g}_j\}_{1\leq i<j\leq n-1}$, since $\bar{g}_n=-\sum_{i=1}^{n-1}\bar{g}_i$ in $\hat{W}_{sep}$.
Suppose $a=\sum_{1\leq i< j\leq n}a_{ij}g_i\wedge g_j\in \wedge^2\mathbb Z[G]$ but $\bar{a}=0\in \wedge^2\hat{W}_{sep}$. 
We obtain $\bar{a}=\sum_{1\leq i< j\leq n-1}(a_{ij}-a_{in}+a_{jn})\bar{g}_i\wedge \bar{g}_j$, since  $\bar{g}_n=-\sum_{i=1}^{n-1}\bar{g}_i$ in $\hat{W}_{sep}$, therefore $a_{ij}=a_{in}-a_{jn}$, and $a=\sum_{1\leq i< j\leq n}(a_{in}-a_{jn})g_i\wedge g_j$ in $\wedge ^2\mathbb Z[G]$. 
Let $b=\sum_{i=1}^{n}a_{in}g_i\in \Z[G]$, then it is clear that $\varphi(\bar{b})=(\sum_{i=1}^{n}a_{in}g_i)\wedge N_G(1)=a$ and this proves the exactness.

d. The forth square is induced by the exact sequences:
$
0\rightarrow\mathbb Z\stackrel{N}{\rightarrow} \mathbb Z[G]\rightarrow\hat{W}_{sep}\rightarrow 0
$.

Note that
$\delta$, $\delta'$, $\delta''$ and the right vertical arrow in the third square are isomorphisms when we take $p$-primary component.
By Lemma \ref{commutator} and the above commutative diagram,  one obtains $$(H^0(F,S^2(\hat{T}_{sep}))/\Dec)\{p\}\simeq  \mathrm{Coker}(H^2(G,\mathbb Z)\times H^2(G,\mathbb Z)\stackrel{\cup}{\longrightarrow}H^4(G,\mathbb Z))\{p\}.\qedhere$$
\end{proof}

Combining Theorem \ref{mainlem} with Proposition \ref{prop:cup}, we immediately have the following result.
\begin{cor}
	Let $K/F$ be a Galois extension with $\mathrm{Gal}(K/F)=G$ and $W=R_{K/F}^{(1)}(\mathbb G_{m,K})$. Suppose  $\#[G,G]$ is a power of $2$. Then for an odd prime $p$, 
	$$
	\bar{H}^3_{nr}(F(W),\mathbb Q/\mathbb Z(2))\{p\}\simeq H^3(G,K^*)\{p\}  \oplus\mathrm{Coker}(H^2(G,\mathbb Z)\times H^2(G,\mathbb Z)\stackrel{\cup}{\longrightarrow}H^4(G,\mathbb Z))\{p\}.
	$$
\end{cor}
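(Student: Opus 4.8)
The plan is to obtain the corollary as a formal concatenation of the three structural results already in hand, since no new analysis is required. First I would apply Theorem \ref{mainlem} to the fixed flasque resolution (\ref{seq:flasque}) $1\to T\to P\to W\to 1$ of the norm one torus $W$; specializing the general torus $S$ there to $S=W$ and passing to the $p$-primary part for the odd prime $p$ yields the canonical splitting
$$\bar{H}^3_{nr}(F(W),\mathbb Q/\mathbb Z(2))\{p\}\simeq H^1(F,T^\circ)\{p\}\oplus \bigl(H^0(F,S^2(\hat{T}_{sep}))/\Dec\bigr)\{p\}.$$
This reduces the problem to identifying the two summands separately in terms of the group $G=\mathrm{Gal}(K/F)$ and the field $K$.

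Next I would rewrite the first summand using Lemma \ref{firstterm}, which asserts $H^1(F,T^\circ)\simeq H^3(G,K^*)$ for precisely the torus $T$ of the resolution (\ref{seq:flasque}); taking $p$-primary components gives $H^1(F,T^\circ)\{p\}\simeq H^3(G,K^*)\{p\}$. For the second summand I would invoke Proposition \ref{prop:cup}: the hypothesis that $\#[G,G]$ is a power of $2$ is exactly the condition that $[G,G]$ is a $2$-group under which that proposition furnishes
$$\bigl(H^0(F,S^2(\hat{T}_{sep}))/\Dec\bigr)\{p\}\simeq \mathrm{Coker}\bigl(H^2(G,\mathbb Z)\times H^2(G,\mathbb Z)\stackrel{\cup}{\longrightarrow}H^4(G,\mathbb Z)\bigr)\{p\}.$$
Substituting both identifications into the splitting produces the asserted isomorphism.

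The main thing to check is bookkeeping rather than any genuine difficulty: I must confirm that Lemma \ref{firstterm} and Proposition \ref{prop:cup} are both stated for the \emph{same} resolution (\ref{seq:flasque}) to which Theorem \ref{mainlem} is applied — they are — and that the two phrasings of the commutator hypothesis coincide. Since all the homological content (the flasque resolution of $W$, the computation of $H^1(F,T^\circ)$, and the translation of the $\Dec$-quotient into a cup-product cokernel) has already been discharged in the preceding lemmas, no real obstacle remains; the corollary is immediate.
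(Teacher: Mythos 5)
Your proposal is correct and matches the paper's own derivation exactly: the paper states this corollary as an immediate consequence of combining Theorem \ref{mainlem} (applied to the resolution (\ref{seq:flasque})) with Proposition \ref{prop:cup}, using Lemma \ref{firstterm} to identify the first summand, which is precisely what you do. No further comment is needed.
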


If $G$ is abelian, then $\#[G,G]=1$, 
this satisfies the condition of the above Corollary, 
and we can give a more precise expression for this cokernel  by induction and Hochschild-Serre spectral sequence. 

If $G=G'\times G''$ and acts trivially on $\mathbb Z$, then the Hochschild-Serre spectral sequence
$$
E_2^{p,q} = H^p(G',H^q(G'',\mathbb Z))\implies H^{p+q}(G,\mathbb Z)
$$
degenerates, and there is the decomposition by the main theorem of \cite{Jannsen1990TheSO}
$$
H^{n}(G,\mathbb Z)\simeq \oplus_{p+q=n}H^p(G',H^q(G'',\mathbb Z)).
$$
Moreover, the following diagram is commutative:
\begin{equation}
\begin{tikzcd}
H^p(G',\mathbb Z)\arrow[d,"\simeq"]\arrow[phantom, r, description, "\times"]&H^q(G'',\mathbb Z)\arrow[d,"\simeq"]\arrow[r,"\cup"]&H^{p+ q}(G,\mathbb Z)\arrow[d,"projection"]\\
E_2^{p,0}\arrow[phantom, r, description, "\times"]&E_2^{0,q}\arrow[r,"\cup"]&E_2^{p,q}
\end{tikzcd}\label{cup}
\end{equation}

Since $G$ is finite and abelian, we can represent $G$ as the direct sum of cyclic groups: $G=\oplus_{i=1}^mC_i$ such that $\#C_{i-1}|\#C_i$ ($1<i\leq m$). We set $G_i=\oplus_{j=1}^{i}C_j$, $G=G_m$, $G_{i+1}=G_i\oplus C_{i+1}$.

\begin{lem}
Notations as above, let $d_{i}=(m-i)(m-i-1)/2$, then
$$\mathrm{Coker}(H^2(G,\mathbb Z)\times H^2(G,\mathbb Z)\stackrel{\cup}{\longrightarrow}H^4(G,\mathbb Z))\simeq \oplus_{i=1}^{m-2}G_i^{m-i-1} \simeq \oplus_{i=1}^{m-2}C_i^{d_{i}}.
$$ 
\end{lem}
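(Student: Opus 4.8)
The plan is to induct on the number $m$ of cyclic factors, peeling off the largest factor and writing $G=G_{m-1}\times C_m$. Set $n_i=\#C_i$, so that the hypothesis $\#C_{i-1}\mid\#C_i$ gives $n_i\mid n_j$ whenever $i\le j$. I abbreviate the cup product by $\cup_G\colon H^2(G,\mathbb{Z})\otimes H^2(G,\mathbb{Z})\to H^4(G,\mathbb{Z})$, and I write $x_i\in H^2(G,\mathbb{Z})$ for the pullback of a generator of $H^2(C_i,\mathbb{Z})=\mathbb{Z}/n_i$ along the projection $G\to C_i$. The goal is to establish the recursion
\[
\mathrm{Coker}(\cup_G)\simeq \mathrm{Coker}(\cup_{G_{m-1}})\oplus H^3(G_{m-1},\mathbb{Z})
\]
and then to identify the second summand explicitly and unwind.

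First I would feed $G=G_{m-1}\times C_m$ into the degenerate decomposition $H^n(G,\mathbb{Z})\simeq\oplus_{p+q=n}H^p(G_{m-1},H^q(C_m,\mathbb{Z}))$. Since $H^q(C_m,\mathbb{Z})$ vanishes for odd $q$ and equals $\mathbb{Z},\ \mathbb{Z}/n_m,\ \mathbb{Z}/n_m$ for $q=0,2,4$, the degree-$4$ row collapses to
\[
H^4(G,\mathbb{Z})\simeq H^4(G_{m-1},\mathbb{Z})\ \oplus\ H^2(G_{m-1},\mathbb{Z}/n_m)\ \oplus\ \mathbb{Z}/n_m,
\]
the three summands being the bidegrees $(4,0),(2,2),(0,4)$. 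Because $E_2=E_\infty$ is a bigraded ring, the product of a $(p,0)$-class with a $(0,q)$-class lies in bidegree $(p,q)$; hence diagram (\ref{cup}) lets me read off the image of $\cup_G$ summand by summand. In bidegree $(4,0)$ the products $x\cup y$ with $x,y\in H^2(G_{m-1},\mathbb{Z})$ contribute exactly $\mathrm{Im}(\cup_{G_{m-1}})$; in bidegree $(0,4)$ the single product $x_m\cup x_m$ generates all of $H^4(C_m,\mathbb{Z})=\mathbb{Z}/n_m$; and in bidegree $(2,2)$ the products $x_m\cup x$ with $x\in H^2(G_{m-1},\mathbb{Z})$ give, via the coefficient pairing in (\ref{cup}), precisely the image of the reduction map $\rho\colon H^2(G_{m-1},\mathbb{Z})\to H^2(G_{m-1},\mathbb{Z}/n_m)$.

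Since the image of $\cup_G$ is generated by elements each lying in a single bidegree, it splits as the direct sum of its three components, and so does the cokernel. The $(4,0)$ and $(0,4)$ parts give $\mathrm{Coker}(\cup_{G_{m-1}})$ and $0$, while the $(2,2)$ part is $H^2(G_{m-1},\mathbb{Z}/n_m)/\mathrm{Im}(\rho)$, which by the Bockstein sequence for $0\to\mathbb{Z}\xrightarrow{n_m}\mathbb{Z}\to\mathbb{Z}/n_m\to 0$ equals ${}_{n_m}H^3(G_{m-1},\mathbb{Z})$. Here the divisibility hypothesis enters decisively: the exponent of $G_{m-1}$ is $n_{m-1}\mid n_m$, so $n_m$ annihilates $H^3(G_{m-1},\mathbb{Z})$ and ${}_{n_m}H^3(G_{m-1},\mathbb{Z})=H^3(G_{m-1},\mathbb{Z})$. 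This yields the claimed recursion. I expect this identification of the $(2,2)$-cokernel — keeping careful track that $\cup$ respects the bigrading and that $\mathrm{coker}(\rho)$ recovers all of $H^3(G_{m-1},\mathbb{Z})$ — to be the main technical point.

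Finally I would compute the new summand. For a finite abelian group $A$ one has $H^3(A,\mathbb{Z})\simeq\wedge^2 A$, being the Pontryagin dual of the Schur multiplier $H_2(A,\mathbb{Z})=\wedge^2 A$; therefore
\[
H^3(G_{m-1},\mathbb{Z})\simeq\wedge^2\big(\oplus_{i=1}^{m-1}C_i\big)\simeq\bigoplus_{1\le i<j\le m-1}C_i\otimes C_j\simeq\bigoplus_{1\le i<j\le m-1}C_i\simeq\bigoplus_{i=1}^{m-2}C_i^{\,m-1-i},
\]
where $C_i\otimes C_j\simeq C_i$ for $i<j$ again uses $n_i\mid n_j$. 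The base cases $m=1,2$ give trivial cokernel ($G$ cyclic, or $H^3(G_{m-1},\mathbb{Z})=0$), matching the empty sum. Assuming inductively $\mathrm{Coker}(\cup_{G_{m-1}})\simeq\oplus_{i=1}^{m-3}C_i^{(m-1-i)(m-2-i)/2}$ and adding the above, an elementary reindexing of exponents — for each $C_j$ the total exponent becomes $\tfrac{(m-1-j)(m-2-j)}2+(m-1-j)=\tfrac{(m-j)(m-1-j)}2=d_j$ — produces $\mathrm{Coker}(\cup_G)\simeq\oplus_{i=1}^{m-2}C_i^{d_i}$, which the summation $\oplus_{i=1}^{m-2}G_i^{\,m-i-1}\simeq\oplus_{j=1}^{m-2}C_j^{\,\sum_{i\ge j}(m-i-1)}=\oplus_{j=1}^{m-2}C_j^{d_j}$ shows to be the asserted form.
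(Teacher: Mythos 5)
Your proof is correct and follows essentially the same route as the paper: both decompose $H^4(G,\mathbb Z)$ via the degenerate Hochschild--Serre/K\"unneth splitting for $G=G_{m-1}\times C_m$, handle the three bidegrees separately (cyclic periodicity for $(0,4)$, the Bockstein sequence plus the divisibility hypothesis $n_{m-1}\mid n_m$ for $(2,2)$, and induction for $(4,0)$), and then count. The only cosmetic difference is that you identify the new summand as $H^3(G_{m-1},\mathbb Z)\simeq\wedge^2 G_{m-1}$ directly, where the paper reaches the same group $\oplus_{i=1}^{m-2}G_i$ by a further induction.
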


\begin{proof}
Firstly, $H^2(G,\mathbb Z)\simeq E_2^{2,0}\oplus E_2^{0,2}$ by the Hochschild-Serre spectral sequence with $G=G_{m-1}\times C_m$ and $E_2^{r,s}=0$ when $s$ is even. 
Similarly, $H^4(G,\mathbb Z)\simeq E_2^{4,0}\oplus E_2^{2,2}\oplus E_2^{0,4}$. Therefore, from \eqref{cup}, we can divide into three steps: 

a) $\ \mathrm{Coker}(E_2^{0,2}\times E_2^{0,2}\stackrel{\cup}{\longrightarrow}E_2^{0,4})$: 
 This is trivial by \cite[Proposition 1.6.12]{neukirch2013} since $C_m$ is cyclic .

b) $ \mathrm{Coker}(E_2^{2,0}\times E_2^{0,2}\stackrel{\cup}{\longrightarrow}E_2^{2,2})$: 
Assume that $C_m=\mathbb Z/n_m\mathbb Z$. Since $E_2^{0,2}\simeq C_m=\mathbb Z/n_m\mathbb Z$, the cokernel of this cup product is equal to the cokernel of the map $H^2(G_{m-1},\mathbb Z)\stackrel{}{\longrightarrow}H^2(G_{m-1},\mathbb Z/n_m\mathbb Z)$ induced by $\mathbb Z\to \mathbb Z/n_m\mathbb Z$. 
The exact sequence 
$$
0\longrightarrow \Z  \stackrel{\times n_m}{\longrightarrow}  \Z\longrightarrow \Z/n_m\longrightarrow 0
$$  derives the exact sequence
$$
H^2(G_{m-1},\mathbb Z)\longrightarrow H^2(G_{m-1},\mathbb Z/n_m\mathbb Z)\longrightarrow H^3(G_{m-1},\mathbb Z)\stackrel{\times n_m}{\longrightarrow} H^3(G_{m-1},\mathbb Z).
$$
The last homomorphism is 0 because of the decomposition of $G$. 
Thus, by an easy induction, one has $ \mathrm{Coker}(E_2^{2,0}\times E_2^{0,2}\stackrel{\cup}{\longrightarrow}E_2^{2,2})\simeq H^3(G_{m-1},\mathbb Z)\simeq \oplus_{i=1}^{m-2}G_i.$

c) $ \mathrm{Coker}(E_2^{2,0}\times E_2^{2,0}\stackrel{\cup}{\longrightarrow}E_2^{4,0})$: 
This can be obtained by an induction from the steps a) and b).

Finally, we obtain the result by induction:
\begin{align*}
\mathrm{Coker}(H^2(G,\mathbb Z)&\times H^2(G,\mathbb Z)\stackrel{\cup}{\longrightarrow}H^4(G,\mathbb Z))
\simeq \oplus_{i=1}^{m-1}H^3(G_{i},\mathbb Z)\\
&\simeq \oplus_{i=1}^{m-2}G_i^{m-i-1} \simeq \oplus_{i=1}^{m-2}C_i^{d_{i}}. \qedhere
\end{align*}
\end{proof}

 We immediately obtain the following theorem by the previous lemmas.
\begin{thm}
	 Let $W=R_{K/F}^{(1)}(\mathbb G_{m,K})$ and $K/F$  an abelian extension  with $\mathrm{Gal}(K/F)= G\simeq\oplus_{i=1}^mC_i$ such that each $C_i$ is cyclic and $\#C_{i-1}|\#C_i$ $(1<i\leq m)$. Denote $d_{i}=(m-i)(m-i-1)/2$. Then for any odd prime $p$, 
	$$
		\bar{H}^3_{nr}(F(W),\mathbb Q/\mathbb Z(2))\{p\}\simeq 	H^3(G,K^*)\{p\}  \oplus\oplus_{i=1}^{m-2}C_i\{p\}^{d_{i}}.
	$$
	Moreover, if $K/F$ has odd degree, then
	$$
	\bar{H}^3_{nr}(F(W),\mathbb Q/\mathbb Z(2))\simeq H^3(G,K^*)  \oplus \oplus_{i=1}^{m-2}C_i^{d_{i}}.
	$$
\end{thm}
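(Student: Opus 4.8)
The plan is to assemble the identifications established in the previous results, since every analytic ingredient has already been prepared and no new computation is needed. First I would apply the Blinstein--Merkurjev decomposition of Theorem \ref{mainlem} to the fixed flasque resolution (\ref{seq:flasque}) of $W$: for any odd prime $p$ this gives a canonical isomorphism
$$
\bar{H}^3_{nr}(F(W),\mathbb Q/\mathbb Z(2))\{p\}\simeq H^1(F,T^\circ)\{p\}\oplus \bigl(H^0(F,S^2(\hat{T}_{sep}))/\Dec\bigr)\{p\}.
$$
It therefore suffices to identify the two summands on the right separately and then combine them.

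For the first summand, Lemma \ref{firstterm} gives $H^1(F,T^\circ)\simeq H^3(G,K^*)$, whence $H^1(F,T^\circ)\{p\}\simeq H^3(G,K^*)\{p\}$. For the second summand I would invoke Proposition \ref{prop:cup}: since $G$ is abelian its commutator $[G,G]$ is trivial, hence (trivially) a power of $2$, so the hypothesis of that proposition is met and
$$
\bigl(H^0(F,S^2(\hat{T}_{sep}))/\Dec\bigr)\{p\}\simeq \mathrm{Coker}\bigl(H^2(G,\mathbb Z)\times H^2(G,\mathbb Z)\stackrel{\cup}{\longrightarrow}H^4(G,\mathbb Z)\bigr)\{p\}.
$$
The preceding lemma, using the decomposition $G\simeq\oplus_{i=1}^m C_i$ and the degeneration of the Hochschild--Serre spectral sequence, computes this cokernel as $\oplus_{i=1}^{m-2}C_i^{d_i}$ with $d_i=(m-i)(m-i-1)/2$; taking $p$-primary parts yields $\oplus_{i=1}^{m-2}C_i\{p\}^{d_i}$. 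Combining the two summands gives the first displayed isomorphism of the theorem.

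For the odd-degree statement I would remove the $\{p\}$-decorations by a torsion argument. As recorded in the proof of Proposition \ref{prop1}, the group $\bar{H}^3_{nr}(F(W),\mathbb Q/\mathbb Z(2))$ is annihilated by the degree of a splitting field of $W$, which here is $[K:F]$. If $[K:F]$ is odd, this group is torsion supported entirely at odd primes (its $2$-part and its $q$-part for $q\nmid[K:F]$ vanish), so it is the direct sum of its odd $p$-primary components, each computed above. Since $H^3(G,K^*)$ is annihilated by $|G|=[K:F]$ and each $C_i$ has odd order, one has $H^3(G,K^*)=\oplus_{p\text{ odd}}H^3(G,K^*)\{p\}$ and $\oplus_{i=1}^{m-2}C_i^{d_i}=\oplus_{p\text{ odd}}\bigl(\oplus_{i=1}^{m-2}C_i\{p\}^{d_i}\bigr)$. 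Summing the first formula over all odd $p$ then recovers $\bar{H}^3_{nr}(F(W),\mathbb Q/\mathbb Z(2))\simeq H^3(G,K^*)\oplus\oplus_{i=1}^{m-2}C_i^{d_i}$, as required.

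Because the genuine work lies in Proposition \ref{prop:cup} and in the cokernel computation, the only real care needed at this stage is in the odd-degree reduction. The step I would check most carefully is that the annihilation bound by $[K:F]$ is valid over the arbitrary base field $F$ (and not only over number fields, where finiteness was separately proved), and that stripping the $\{p\}$ is legitimate precisely because every group appearing is $[K:F]$-torsion, so no $2$-primary contribution is silently discarded.
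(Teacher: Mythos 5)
Your derivation of the $p$-primary formula is exactly the paper's: apply Theorem \ref{mainlem} to the flasque resolution (\ref{seq:flasque}), identify the first summand via Lemma \ref{firstterm}, and identify the second via Proposition \ref{prop:cup} (noting $[G,G]=1$) together with the cokernel lemma; nothing to add there. The only point where you genuinely diverge is the odd-degree refinement. You kill the $2$-primary part by the restriction--corestriction annihilation of $\bar H^3_{nr}(F(W),\mathbb Q/\mathbb Z(2))$ by the splitting degree $[K:F]$ --- a fact the paper asserts inside the proof of Proposition \ref{prop1}, and which does hold over an arbitrary base field by $\mathrm{cores}\circ\mathrm{res}=[K:F]$ together with the vanishing of $\bar H^3_{nr}$ for the split (hence rational) torus $W_K$, so the caveat you raise is harmless. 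The paper instead stays inside its module-theoretic identifications: by the isomorphism (\ref{abelian}) the group $H^0(F,S^2(\hat{T}_{sep}))/\Dec$ is a quotient of $H^1(G,N)$ and hence $|G|$-torsion, while $H^1(F,T^\circ)\simeq H^3(G,K^*)$ is $|G|$-torsion, so the exact sequence of Theorem \ref{mainlem} forces the whole group to be odd torsion when $[K:F]$ is odd. Both routes then conclude identically by summing the $p$-primary formula over the odd primes dividing $[K:F]$; yours leans on the transfer argument (stated but not proved in detail in the paper), the paper's is self-contained but relies on the remark containing (\ref{abelian}). Either is correct.
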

\begin{proof}
	By (\ref{abelian}), if $\Gal(K/F)$ has odd order, $2$-primary part of $\bar{H}^3_{nr}(F(W),\mathbb Q/\mathbb Z(2))$ is trivial, the proof then follows.
\end{proof}

In particular, if $m=2$, we deduce:

\begin{cor}
Assume $W=R_{K/F}^{(1)}(\mathbb G_{m,K})$  and $K/F$  an abelian extension  with  $\mathrm{Gal}(K/F)=G\simeq C_1\oplus C_2$ such that $C_1$ and $C_2$ are cyclic groups, then $$\bar{H}^3_{nr}(F(W),\mathbb Q/\mathbb Z(2))\{p\}= H^3(G,K^*)\{p\}$$ for any odd prime $p$. In particular, if $G$ has odd order, then $$\bar{H}^3_{nr}(F(W),\mathbb Q/\mathbb Z(2))= H^3(G,K^*).$$
\end{cor}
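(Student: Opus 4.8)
The plan is to read this corollary off the preceding main theorem by setting $m=2$, and then to record the two brief verifications that make the stated isomorphisms precise. First I would apply that theorem to $G \simeq C_1 \oplus C_2$, which is exactly the case $m=2$. In its conclusion the term that distinguishes $\bar H^3_{nr}(F(W),\mathbb Q/\mathbb Z(2))\{p\}$ from $H^3(G,K^*)\{p\}$ is the direct sum $\oplus_{i=1}^{m-2} C_i^{d_i}$; for $m=2$ the index range $1 \le i \le m-2$ is empty, so this summand is $0$. Hence the $p$-primary formula collapses to $\bar H^3_{nr}(F(W),\mathbb Q/\mathbb Z(2))\{p\} \simeq H^3(G,K^*)\{p\}$ for every odd prime $p$, giving the first assertion.

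To keep the argument self-contained I would also exhibit the vanishing of the correction term intrinsically rather than by the index count. Combining Theorem~\ref{mainlem} with Lemma~\ref{firstterm} and Proposition~\ref{prop:cup} (applicable because $G$ is abelian, so $[G,G]=1$ is a $2$-group), the summand in question is identified with $\mathrm{Coker}\bigl(H^2(G,\mathbb Z)\times H^2(G,\mathbb Z)\stackrel{\cup}{\longrightarrow}H^4(G,\mathbb Z)\bigr)\{p\}$. For $G = C_1 \oplus C_2$ the K\"unneth-type decomposition recorded in \eqref{cup} splits this cup product into the three pieces treated in the cokernel lemma, and the only one that could contribute is $H^3(G_1,\mathbb Z) = H^3(C_1,\mathbb Z)$, which vanishes since $C_1$ is cyclic. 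Thus the cup product is surjective, its cokernel is zero, and again only $H^3(G,K^*)\{p\}$ survives.

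For the final clause I would observe that when $G$ has odd order there is no $2$-primary part left to reconcile. Both $\bar H^3_{nr}(F(W),\mathbb Q/\mathbb Z(2))$ and $H^3(G,K^*)$ are annihilated by $\#G$ — the former because it is killed by the degree of a splitting field of $W$, namely $[K:F]=\#G$ (as used in the proof of Proposition~\ref{prop1}), the latter by the standard fact that group cohomology in positive degree is killed by the group order. Since $\#G$ is odd, each side is the direct sum of its $p$-primary components over odd primes $p$, and the isomorphisms of the first part assemble into $\bar H^3_{nr}(F(W),\mathbb Q/\mathbb Z(2)) = H^3(G,K^*)$. One may equivalently invoke \eqref{abelian} to see directly that the odd-order hypothesis forces the $2$-primary part to be trivial.

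I expect no genuine obstacle: the corollary is a bookkeeping consequence of the main theorem, and even the intrinsic verification only uses that $H^3$ of a cyclic group with trivial integral coefficients vanishes. The single point deserving a sentence of care is the passage from the $p$-primary statements to the integral identity in the odd-order case, which rests entirely on both sides being $\#G$-torsion with $\#G$ odd, so that no $2$-torsion arises on either side.
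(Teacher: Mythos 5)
Your proposal is correct and takes the same route as the paper, which deduces the corollary simply by specializing the main theorem to $m=2$, where the correction term $\oplus_{i=1}^{m-2}C_i^{d_i}$ is an empty sum (and, for the odd-order case, by noting via \eqref{abelian} that the $2$-primary part vanishes). Your supplementary checks --- the vanishing of $\mathrm{Coker}(H^2(G,\mathbb Z)\times H^2(G,\mathbb Z)\stackrel{\cup}{\to}H^4(G,\mathbb Z))$ via $H^3(C_1,\mathbb Z)=0$, and the observation that both sides are killed by the odd integer $\#G$ --- are consistent with the paper's machinery but not needed beyond what the theorem already provides.
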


\begin{rem*}\label{rem: K}
	If $K/F$ is a Galois extension of local fields, then $H^3(G,K^*)=0$ by \cite[Corollary 7.2.2]{neukirch2013}.
	If $K/F$ is a Galois extension of global fields, we consider the exact sequence
	$$
	1\longrightarrow K^*\longrightarrow I_K\longrightarrow C_K\longrightarrow 1
	$$
	where $I_K$ is the idèle group and $C_K$ is the idèle class group. 
	This induces the following commutative diagram with the first row exact \cite[Chapter VIII]{neukirch2013}
	\[
	\begin{tikzcd}
		H^2(G,I_K)\arrow[r,"\psi"]\arrow[d,"\simeq"]&H^2(G,C_K)\arrow[r]\arrow[dd,"\simeq","inv_{K|F}"']&H^3(G,K^*)\arrow[r]& H^3(G,I_K)\arrow[d, equal]\\
		\oplus_{\mathfrak p}H^2(G_{\mathfrak p},K^*_{\mathfrak p})\arrow[d,"\simeq","inv_{\mathfrak p}"']&&& \oplus_{\mathfrak p}H^3(G_{\mathfrak p},K_{\mathfrak p}^*)\arrow[d, equal]\\
		\oplus_{\mathfrak p}{1\over [K_{\mathfrak p}:F_{\mathfrak p}] }\mathbb Z/\mathbb Z\arrow[r]&{1\over [K:F] }\mathbb Z/\mathbb Z&& 0
	\end{tikzcd}
	\]
	where the sum run all prime $\mathfrak p$ over $F$ and $G_{\mathfrak p}$ is the decomposition subgroup of $G$  with some prime $\mathfrak B$ above  $\mathfrak p$. 
	Therefore $H^3(G,K^*)\simeq \mathrm{coker}\ \psi$.
	In general $\psi$ is  not surjective, such as $F=\Q$ and $K=\Q(\sqrt{-1},\sqrt{17})$.
\end{rem*}

\begin{exa*} \label{exa:un}
 Let $F=\mathbb{Q}(\sqrt{-3})$, $K/F$ an abelian field and  $W=R_{K/F}^{(1)}(\mathbb G_{m,K})$.

(1) Let $K=F(\sqrt[3]{2}, \sqrt[3]{7})$, 
then $\mathrm{Gal}(K/F)=G\simeq\mathbb{ Z}/3 \oplus \mathbb{ Z}/3$. By Remark \ref{rem: K}, one obtains $H^3(G,K^*)=0$ since $K_{\frak \nu}/F_{\frak p}$ has degree $9$, where $\nu\mid \frak p$ are places over $7$. So $$\bar{H}^3_{nr}(F(W),\mathbb Q/\mathbb Z(2)) = 0.$$

(2) Let $K=F(\sqrt[3]{2}, \sqrt[3]{3})$, 
then $\mathrm{Gal}(K/F)=G\simeq\mathbb{ Z}/3\oplus \mathbb{ Z}/3$. Its local Galois groups are cyclic, which implies  $H^3(G,K^*)=\mathbb{ Z}/3$ by Remark \ref{rem: K}. So $$\bar{H}^3_{nr}(F(W),\mathbb Q/\mathbb Z(2))= \Z/3.$$

(3) Let $K=F(\sqrt[3]{2},\sqrt[3]{3}, \sqrt[3]{7})$, 
then $\mathrm{Gal}(K/F)=G\simeq(\mathbb{ Z}/3)^3$. Its local Galois groups are $0, \mathbb{ Z}/3 \text{ or }(\mathbb{ Z}/3)^2$, which implies  $H^3(G,K^*)=\mathbb{ Z}/3$ by Remark \ref{rem: K}. So $$\bar{H}^3_{nr}(F(W),\mathbb Q/\mathbb Z(2))= (\Z/3)^2.$$
\end{exa*}

\bibliography{ref}
\nocite{serre}

\end{document}